\documentclass[letterpaper, 10 pt, conference]{ieeeconf} 
\IEEEoverridecommandlockouts
\usepackage[utf8]{inputenc}
\usepackage{CJKutf8}
\usepackage{ascmac}
\usepackage{url}
\usepackage{amsmath}
\usepackage{graphicx}
\usepackage{amsfonts}
\usepackage{bm}
\newtheorem{definition}{Definition}
\newtheorem{theorem}{Theorem}
\newtheorem{lem}{Lemma}
\newtheorem{prop}{Proposition}
\newtheorem{cor}{Corollary}
\newtheorem{prob}{Problem}

\usepackage{color}

\title{\LARGE \bf
A Controllability Gramain Shaping with LMI Constraints \\under Bures--Wasserstein Distance
}

\ifdefined\iflatexml
\author{Koju Nishimotot%
\thanks{\* This work has been submitted to the IEEE for possible publication. Copyright may be transferred without notice, after which this version may no longer be accessible.}%
\thanks{
Koju Nishimoto is with the Institute of Technology, Shimizu Corporation, Tokyo 135-0044, Japan {\tt\small koju.nishimoto@shimz.co.jp}}%
,
Yuki Onishi%
\thanks{
Yuki Onishi is with the National Institute of Informatics, Chiba 277-0882, Japan {\tt\small onishi@nii.ac.jp}}%
,
Riku Funada%
\thanks{
Riku Funada is with the Graduate School of Informatics, Kyoto University, Kyoto 606-8501, Japan {\tt\small funada@i.kyoto-u.ac.jp}}%
,
Mitsuji Sampei%
\thanks{
Mitsuji Sampei is with the Polytechnic University of Japan, Tokyo 187-0035, Japan {\tt\small mitsuji@sampei.jp}}
}
\else
\author{Koju Nishimoto, Yuki Onishi, Riku Funada and Mitsuji Sampei% <-this % stops a space
\thanks{* This work has been submitted to the IEEE for possible publication. Copyright may be transferred without notice, after which this version may no longer be accessible.}%
\thanks{Koju Nishimoto is with the Institute of Technology, Shimizu Corporation, Tokyo 135-0044, Japan {\tt\small koju.nishimoto@shimz.co.jp}}%
\thanks{Yuki Onishi is with the National Institute of Informatics, Chiba 277-0882, Japan {\tt\small onishi@nii.ac.jp}}%
\thanks{Riku Funada is with the Graduate School of Informatics, Kyoto University, Kyoto 606-8501, Japan {\tt\small funada@i.kyoto-u.ac.jp}}
\thanks{Mitsuji Sampei is with the Polytechnic University of Japan, Tokyo 187-0035, Japan {\tt\small mitsuji@sampei.jp}}
}
\fi

\begin{document}

\maketitle
\thispagestyle{empty}
\pagestyle{empty}

\begin{abstract}
This paper proposes a controller design method for shaping the controllability Gramian into a desired form to design the effect from exogenous inputs to the system state.
Using the Bures--Wasserstein distance, we formulate the shaping problem as the minimization of the distance between the system Gramian and a desired Gramian, and the objective function is shown to be strictly convex on the set of symmetric positive definite matrices. 
In addition, by deriving a semidefinite programming formulation via a linear matrix inequality (LMI), computational efficiency is improved and additional LMI constraints can be incorporated.
When the exogenous input is modeled as Gaussian white noise, the proposed framework is closely related to $H_2$ control, which can be interpreted as a special case of optimal transport.
Numerical examples demonstrate anisotropic controllability design for a guidance robot and verify the ability to impose additional directional constraints through LMIs.
The numerical examples also confirm that the proposed method approaches $H_2$ control as the desired Gramian tends to zero.
\end{abstract}

\section{Introduction}

Designing systems that are easy to control is important for smooth operation.
However, the ``ease of control'' depends on the objective and application, and is not easy to evaluate in a general framework.
Since many control problems have been formulated mathematically in control theory, defining ``ease of control'' within this framework is expected to provide a general measure for analysis and design.
In classical control, input-output properties and disturbance responses are evaluated through the frequency responses of transfer and sensitivity functions \cite{Feedback_Control_of_Dynamic_Systems}.
For nonlinear systems, application-dependent measures are also used, such as manipulability ellipsoids in robotics \cite{Modern_Robotics} and design indices for drones considering fault tolerance and hovering performance \cite{Mochida2022}.

In systems driven by exogenous inputs, it is important to design not only stability and tracking performance but also the response to such inputs.
In particular, it is important to specify the directions in the state space along which the system is easy or difficult to move.
This is particularly important in human-interactive systems, where operability and stability must be balanced.
Impedance control \cite{Hogan1984} and admittance control \cite{Landi2017} shape the relation between external forces and motion, but mainly prescribe input-output relations or local responses, rather than directional reachability over the entire state space.

The controllability Gramian \cite{Kalman1963,Sato2024,Lygeros2016,Amita2019} is suitable for this purpose because it characterizes reachability in terms of input energy.
By shaping the controllability Gramian, one can quantitatively design the apparent controllability from exogenous inputs and quantify both ease and difficulty of motion in each state direction.
This idea is expected to be applicable to systems such as guidance robots for visually impaired users \cite{Kayukawa2022,Takagi2025} and manipulators with direct teaching \cite{Bascetta2013,Matteo2016}.
The authors have previously proposed Gramian shaping for such systems from the user's perspective \cite{Nishimoto2024}.
In the Gramian shaping, a desired Gramian is specified, and the controllability Gramian from the exogenous input is shaped by an internal feedback input so as to approach the desired one.
Through the anisotropy of the Gramian, this approach enables quantitative design of ease and difficulty of control in each state direction.

To make such reachability design viable, the distance between the system Gramian and the desired Gramian should yield a tractable optimization problem to admit a clear interpretation.
In the previous Gramian shaping method \cite{Nishimoto2024}, this discrepancy was measured by the affine-invariant Riemannian (AIR) distance \cite{Positive_Definite_Matrices}.
However, AIR-based Gramian shaping leads to a nonconvex optimization problem, so global optimality is not guaranteed and efficient algorithms are not readily available.
As a result, robustness and computational efficiency are difficult to ensure.
Moreover, the AIR distance has no clear physical interpretation, making it difficult to relate Gramian shaping to existing control frameworks.

To overcome these limitations, this paper proposes a new Gramian shaping method based on the Bures--Wasserstein distance (BW distance).
The BW distance is defined on symmetric positive-semidefinite matrices \cite{Oostrum2022} and coincides with the $2$-Wasserstein distance between Gaussian distributions with the same mean \cite{BHATIA2019165}.
Thanks to this property, it has been used in graph generation \cite{Keyue2026}, statistical theory based on BW barycenters \cite{Haasler2024,Adam2024,Leonardo2024}, and geodesic analysis \cite{Thanwerdas2023}.

The main contributions of this paper are summarized as follows.
\begin{enumerate}
\item We introduce the BW distance into controllability Gramian shaping from exogenous inputs to make the optimization problem convex on the set of symmetric positive definite matrices.
Thus, BW-based Gramian shaping can be formulated as a convex optimization problem.
\item Using the linear matrix inequality (LMI) representation of the BW distance, we derive a semidefinite programming (SDP) formulation of Gramian shaping, which allows various additional LMI constraints.
\item We clarify the relation between the proposed method and $H_2$ control, and show that $H_2$ control can be interpreted as a special case of BW-based Gramian shaping.
\end{enumerate}

This paper is organized as follows.
Section~II reviews the controllability Gramian, introduces the Gramian for exogenous inputs, and summarizes the relation between realizable  Gramians and feedback gains.
Section~III defines the proposed method, namely, BW-based Gramian shaping and shows that it can be formulated as a convex problem.
Section~IV presents an SDP-based solution using an LMI representation of the BW distance.
Section~V discusses the relation between the proposed method and $H_2$ control, showing that the proposed framework provides a link between classical optimal control and optimal transport.
Section~VI verifies the effectiveness of the proposed method through numerical examples.

Notation: Vectors are denoted by lowercase bold letters such as $\bm{a}$, and matrices by uppercase bold letters such as $\bm{A}$.
$\bm{I}_n$ and $\bm{O}_n$ denote the $n\times n$ identity matrix and zero matrix, respectively.
$\bm{O}_{n\times m}$ denotes the $n\times m$ zero matrix, and $\bm{0}_n$ denotes the $n$-dimensional zero vector.
$\bm{A}^\top$ and $\bm{A}^H$ denote the transpose and Hermitian transpose of $\bm{A}$, respectively.
$\bm{A}^\dagger$ denotes the Moore--Penrose pseudoinverse of $\bm A$.
$\mathbb{R}^{n}$ and $\mathbb{R}^{n\times m}$ denote the set of $n$-dimensional real vectors and the set of $n\times m$ real matrices, respectively.
$\mathbb{R}_+$ denotes the set of nonnegative real numbers.
$\mathbb{S}_{+}^n$ denotes the set of $n\times n$ symmetric positive-semidefinite matrices, and $\mathbb{S}_{++}^n$ denotes the set of $n\times n$ symmetric positive definite matrices.
$\mathcal{O}(n)$ and $\mathcal{SO}(n)$ denote the orthogonal group and the special orthogonal group in dimension $n$, respectively.
A diagonal matrix with diagonal entries $a_1,a_2,\ldots,a_n$ is denoted by $\mathrm{diag}\left(a_1,a_2,\ldots,a_n\right)$.
A block diagonal matrix formed by the matrices $\bm{A}_1,\bm{A}_2,\ldots,\bm{A}_n$ is denoted by $\mathrm{blkdiag}\left(\bm{A}_1,\bm{A}_2,\ldots,\bm{A}_n\right)$.
For $\bm{A}\in\mathbb{S}_{++}^n$, there exists a spectral decomposition $\bm{A}=\bm{U}\bm{\Lambda}\bm{U}^\top$, where $\bm{\Lambda}=\mathrm{diag}(\lambda_1,\lambda_2,\ldots,\lambda_n)$, $\lambda_i> 0$ for all $i\in\{1,2,\ldots,n\}$, and $\bm{U}\in\mathcal{O}(n)$.
For a real number $\alpha$, the matrix power of a symmetric positive definite matrix is defined by $\bm{A}^\alpha=\bm{U}\bm{\Lambda}^\alpha\bm{U}^\top$, where $\bm{\Lambda}^\alpha=\mathrm{diag}(\lambda_1^\alpha,\lambda_2^\alpha,\ldots,\lambda_n^\alpha)$.

\section{Preliminaries}
In this section, we first review the standard controllability Gramian for stable systems.
We then introduce the controllability Gramian considered in this paper for systems driven by exogenous inputs.

Consider the following linear system with state $\bm{x}\in\mathbb{R}^n$ and input $\bm{u}\in\mathbb{R}^m$:
\begin{align}
  \dot{\bm{x}}&=\bm{A}\bm{x}+\bm{B}\bm{u},\label{eq:linear_system}
\end{align}
where $\bm{A}\in\mathbb{R}^{n\times n}$ and $\bm{B}\in\mathbb{R}^{n\times m}$.

\subsection{Definition of the controllability Gramian}  
The controllability Gramian has been used as a measure describing the influence of inputs on the system state and as a tool for analyzing properties of systems.
For linear systems, the standard controllability Gramian is defined as follows.
\begin{definition}[Ch.~6\cite{Linear_System_Theory_and_Design}]
The controllability Gramian of system \eqref{eq:linear_system} is defined, when $\bm A$ is Hurwitz stable, by
\begin{align}
  \bm{P}_0=&\int_{-\infty}^{0}~e^{-\bm{A}\tau}\bm{B}\bm{B}^\top e^{-\bm{A}^\top\tau}~d\tau\nonumber\\
     =&\int_{0}^{\infty}~e^{\bm{A}\tau}\bm{B}\bm{B}^\top e^{\bm{A}^\top \tau}~d\tau.\label{eq:infinite_controllability_Gramian}
\end{align}
\end{definition}

\begin{cor}[Th.~6.1\cite{Linear_System_Theory_and_Design}]
    If the system is $(\bm A,\bm B)$-controllable and $\bm A$ is Hurwitz stable, then the controllability Gramian is the unique symmetric positive definite solution of the following Lyapunov equation:
\begin{align}
  \bm{O}_n=\bm{A}\bm{P}_0+\bm{P}_0\bm{A}^\top +\bm{B}\bm{B}^\top .
\end{align}
\end{cor}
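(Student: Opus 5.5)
The plan has three parts: show that $\bm P_0$ satisfies the Lyapunov equation, show that it is the only solution, and show that it is positive definite.

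\textbf{$\bm P_0$ solves the equation.} Hurwitz stability of $\bm A$ gives constants $c,\alpha>0$ with $\|e^{\bm A\tau}\|\le c e^{-\alpha\tau}$. Hence the integrand in \eqref{eq:infinite_controllability_Gramian} decays exponentially, and the integral converges to a symmetric matrix. For the equation itself, differentiate the integrand:
\begin{align}
\frac{d}{d\tau}\left(e^{\bm A\tau}\bm B\bm B^\top e^{\bm A^\top\tau}\right)=\bm A e^{\bm A\tau}\bm B\bm B^\top e^{\bm A^\top\tau}+e^{\bm A\tau}\bm B\bm B^\top e^{\bm A^\top\tau}\bm A^\top .
\end{align}
Integrating over $[0,\infty)$, the left-hand side telescopes to $\lim_{\tau\to\infty}e^{\bm A\tau}\bm B\bm B^\top e^{\bm A^\top\tau}-\bm B\bm B^\top=-\bm B\bm B^\top$, because the limit vanishes by the exponential bound. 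The right-hand side equals $\bm A\bm P_0+\bm P_0\bm A^\top$. This gives $\bm O_n=\bm A\bm P_0+\bm P_0\bm A^\top+\bm B\bm B^\top$.

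\textbf{Uniqueness.} This is the step needing the most care, although it is standard. Suppose $\bm P$ is any solution, and set $\bm D=\bm P-\bm P_0$. Then $\bm A\bm D+\bm D\bm A^\top=\bm O_n$. Consider $\bm F(\tau)=e^{\bm A\tau}\bm D e^{\bm A^\top\tau}$. Its derivative is $e^{\bm A\tau}(\bm A\bm D+\bm D\bm A^\top)e^{\bm A^\top\tau}=\bm O_n$, so $\bm F$ is constant. Hence $\bm D=\bm F(0)=\lim_{\tau\to\infty}\bm F(\tau)=\bm O_n$, again using Hurwitz decay. Equivalently, the Lyapunov operator $\bm X\mapsto\bm A\bm X+\bm X\bm A^\top$ has eigenvalues $\lambda_i+\lambda_j$, none of which is zero when $\bm A$ is Hurwitz. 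So the solution is unique among all matrices, and in particular among symmetric positive definite ones.

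\textbf{Positive definiteness.} Take $\bm v\neq\bm 0_n$. Then
\begin{align}
\bm v^\top\bm P_0\bm v=\int_0^\infty\|\bm B^\top e^{\bm A^\top\tau}\bm v\|^2d\tau\ge0 .
\end{align}
If this were zero, continuity of the integrand would force $\bm v^\top e^{\bm A\tau}\bm B=\bm 0$ for all $\tau\ge0$. Differentiating repeatedly at $\tau=0$ gives $\bm v^\top\bm A^k\bm B=\bm 0$ for $k=0,\dots,n-1$. Thus $\bm v$ is orthogonal to the range of the controllability matrix $[\bm B,\bm A\bm B,\dots,\bm A^{n-1}\bm B]$, which contradicts $(\bm A,\bm B)$-controllability. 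Therefore $\bm P_0\in\mathbb S_{++}^n$, which completes the claim.
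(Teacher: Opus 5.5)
Your proof is correct and complete. The paper gives no argument of its own here; it defers to Th.~6.1 of the cited textbook, whose proof runs as yours does: the infinite integral solves the Lyapunov equation, uniqueness follows from $\lambda_i+\lambda_j\neq 0$ for Hurwitz $\bm{A}$, and positive definiteness follows from the rank of $[\bm{B},\bm{A}\bm{B},\dots,\bm{A}^{n-1}\bm{B}]$. The only thing I would change is the name of your difference matrix $\bm{D}=\bm{P}-\bm{P}_0$, which collides with the paper's exogenous-input matrix $\bm{D}$.
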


If the system is unstable, that is, if $\bm A$ has an eigenvalue with a nonnegative real part, the controllability Gramian diverges and is therefore not defined in general.

The controllability Gramian $\bm{P}_0$ corresponds to the minimum input energy: the minimum energy required to reach a state $\bm{x}_0$ from the origin is given by $\bm{x}_0^\top\bm{P}_0^{-1}\bm{x}_0$.
Therefore, the set $\{\bm{x}\mid\bm{x}^\top\bm{P}_0^{-1}\bm{x}\leq1\}$ represents the reachable region within unit input energy, and the anisotropy of $\bm{P}_0$ characterizes the relative controllability degree in each state direction.

\subsection{Controllability Gramian from exogenous inputs}
In this subsection, we define the controllability Gramian for systems subject to exogenous inputs.
Here, $\bm{u}$ is regarded as an internal control input, and we introduce an exogenous input $\bm{v}\in\mathbb{R}^r$ so that the system is described by
\begin{align}\label{eq:outer_input_sys}
    \dot{\bm{x}}&=\bm{A}\bm{x}+\bm{B}\bm{u}+\bm{D}\bm{v},
\end{align}
where $\bm{D}\in\mathbb{R}^{n\times r}$. 
Both $(\bm{A},\bm{B})$ and $(\bm{A},\bm{D})$ are assumed controllable.
The input $\bm{v}$ is exogenous.
Hence, the energy sources of $\bm u$ and $\bm v$ are different, and we primarily consider systems controlled externally by a user such as a human.
To stabilize this system, we apply the feedback control law $\bm{u}=\bm{K}\bm{x}$, which yields
\begin{align}\label{eq:stab_syst}
  \dot{\bm{x}}=(\bm{A}+\bm{B}\bm{K})\bm{x}+\bm{D}\bm{v}.
\end{align}

Since the system is stabilized, the controllability Gramian from the new input $\bm v$ to the state $\bm{x}$ in \eqref{eq:stab_syst} can be redefined as follows.
\begin{definition}[Sec.~2~\cite{Nishimoto2024}]\label{def:unsta_Gram}
    The controllability Gramian of system \eqref{eq:stab_syst} is defined, as a function of the gain $\bm{K}$, by
    \begin{align} \label{eq:unsta_Gram}
      \bm{P}=\int_{0}^{\infty}~e^{(\bm{A}+\bm{B}\bm{K})\tau}{\bm{D}\bm{D}^\top} e^{(\bm{A}+\bm{B}\bm{K})^\top \tau}~d\tau.
    \end{align}
\end{definition}
\begin{cor}
From the relationship between the controllability Gramian and the Lyapunov equation, the following equation holds:
\begin{align}
  (\bm{A}+\bm{B}\bm{K})\bm{P}+\bm{P}(\bm{A} + \bm{B}\bm{K})^\top + {\bm{D}\bm{D}^\top}=\bm{O}_n.\label{eq:extended_lyapnov_equation}
\end{align}
\end{cor}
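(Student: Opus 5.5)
The plan is to verify the identity directly from Definition~\ref{def:unsta_Gram}, under the standing requirement that $\bm{A}_K:=\bm{A}+\bm{B}\bm{K}$ is Hurwitz stable (this is what "the system is stabilized" means, and it is needed for \eqref{eq:unsta_Gram} to make sense). This is the same argument that underlies the Lyapunov characterization of $\bm{P}_0$ recalled earlier, applied with $\bm{A}\to\bm{A}_K$ and $\bm{B}\to\bm{D}$. In fact the corollary follows by invoking the earlier Lyapunov characterization with these substitutions: controllability of $(\bm{A}_K,\bm{D})$ is not even needed for the equation itself, only for positive definiteness and uniqueness within $\mathbb{S}_{++}^n$.

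To keep things self-contained, I would argue as follows. First, because $\bm{A}_K$ is Hurwitz, there are constants $c,\alpha>0$ with $\|e^{\bm{A}_K\tau}\|\le c e^{-\alpha\tau}$. Hence the integrand in \eqref{eq:unsta_Gram} decays exponentially, so $\bm{P}$ is well defined, symmetric and positive semidefinite.

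Next, I would differentiate the integrand. Setting $\bm{F}(\tau)=e^{\bm{A}_K\tau}\bm{D}\bm{D}^\top e^{\bm{A}_K^\top\tau}$, the product rule gives
\begin{align*}
\frac{d}{d\tau}\bm{F}(\tau)=\bm{A}_K\bm{F}(\tau)+\bm{F}(\tau)\bm{A}_K^\top .
\end{align*}
Integrating over $[0,T]$ and using linearity of the integral to pull the constant matrices $\bm{A}_K$ and $\bm{A}_K^\top$ outside yields
\begin{align*}
\bm{F}(T)-\bm{D}\bm{D}^\top=\bm{A}_K\int_0^T\bm{F}\,d\tau+\int_0^T\bm{F}\,d\tau\,\bm{A}_K^\top .
\end{align*}

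The only step needing care is the passage to the limit $T\to\infty$. Here $\bm{F}(T)\to\bm{O}_n$ by the exponential bound, and the partial integrals converge to $\bm{P}$. Substituting these limits gives $\bm{O}_n-\bm{D}\bm{D}^\top=\bm{A}_K\bm{P}+\bm{P}\bm{A}_K^\top$, which is exactly \eqref{eq:extended_lyapnov_equation}.
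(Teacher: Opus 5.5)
Your proposal is correct and matches the paper. The paper simply invokes the earlier Lyapunov characterization of $\bm{P}_0$ with $\bm{A}\to\bm{A}+\bm{B}\bm{K}$ and $\bm{B}\to\bm{D}$, and your derivation via $\frac{d}{d\tau}\bm{F}=\bm{A}_K\bm{F}+\bm{F}\bm{A}_K^\top$ and the limit $T\to\infty$ is the standard argument behind that fact. You also correctly note that only Hurwitz stability of $\bm{A}+\bm{B}\bm{K}$ is needed for the equation, and this is a worthwhile sharpening: controllability of $(\bm{A}+\bm{B}\bm{K},\bm{D})$ is not implied by that of $(\bm{A},\bm{D})$, and it matters only for positive definiteness, since uniqueness already follows from the Hurwitz property.
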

By redefining the controllability Gramian in the form \eqref{eq:unsta_Gram}, the controllability Gramian of the system now depends on the feedback gain $\bm K$.
Hence, controllability can be modified by adjusting $\bm K$.
The realizable controllability Gramians are restricted by the system structure, namely $\bm{A}$, $\bm{B}$, and $\bm{D}$.
This relation is characterized by the Lyapunov equation \eqref{eq:extended_lyapnov_equation}.
Although \eqref{eq:extended_lyapnov_equation} determines $\bm{P}$ and $\bm{K}$ simultaneously, it can be decomposed into conditions on $\bm{P}$ and formulas for $\bm{K}$ separately by Lemma~\ref{lem:gramian_theorem} and Lemma~\ref{lem:gain_derive_for_gramian} in Appendix.
Lemma~\ref{lem:gramian_theorem} gives an equality condition characterizing the realizable Gramians of system \eqref{eq:outer_input_sys}.
Lemma~\ref{lem:gain_derive_for_gramian} then provides a feedback gain that achieves the realizable Gramian.
Thus, Gramian shaping can be addressed in two separate steps: Gramian design and gain derivation.

The objective of this paper is to use the controllability Gramian in controller design so as to realize desired controllability properties.
More specifically, we shape the controllability Gramian \eqref{eq:unsta_Gram} by adjusting the feedback gain so that it becomes as close as possible to a desired symmetric positive definite matrix.
The next section presents an optimization method for shaping $\bm{P}$ into a desired form.

\section{Gramian Shaping with Bures--Wasserstein Distance}
This section explains the main idea of this paper, namely, Gramian shaping based on the BW distance.
Gramian shaping is a method for deforming the controllability Gramian into a desired form by exploiting the geometry of symmetric positive definite matrices \cite{Nishimoto2024}.
This enables the design of both ease and difficulty of control for each state when the system is operated externally.

In Gramian shaping, the input is designed so that the system approaches an ideal Gramian $\bm{P}_d$ representing the desired controllability.
The ideal Gramian may be defined from a reachable set based on input energy, or designed with reference to an ideal system.
Gramian shaping is achieved by minimizing the discrepancy between the controllability Gramian \eqref{eq:unsta_Gram} and the ideal Gramian $\bm{P}_d$.
In the previous study, this discrepancy between two Gramians was measured by the affine-invariant Riemannian (AIR) distance in accordance with the geometry of symmetric positive definite matrices \cite{Nishimoto2024}.
This made it possible to perform optimization while preserving positive definiteness.
However, optimization based on the AIR distance does not admit a guaranteed global optimum, and its physical interpretation is difficult.
To overcome these difficulties, this paper employs the BW distance, which is a different distance function. 
\begin{definition}[Bures--Wasserstein distance, \cite{BHATIA2019165}]\label{def:bw_distance}
    Given $\bm{X},\bm{Y}\in\mathbb{S}_{+}^n$ define $d_{\mathrm{BW}}(\bm{X},\bm{Y}):\mathbb{S}_+^n\times\mathbb{S}_+^n\to\mathbb{R}_+$ as Bures-Wasserstein distance by the relation
    \begin{align}
        d_{\mathrm{BW}}(\bm{X},\bm{Y})\!=\!\left(\mathrm{tr}\!\left(\bm{X}+\bm{Y}-2\left(\bm{Y}^{\frac{1}{2}}\bm{X}\bm{Y}^{\frac{1}{2}}\right)^{\frac{1}{2}}\right)\right)^{\frac{1}{2}}\!. 
        \label{eq:bw_distance}
    \end{align}
\end{definition}
The BW distance satisfies the axioms of a metric on $\mathbb{S}_{+}^n$ and $\mathbb{S}_{++}^n$.
It is known to coincide with the $2$-Wasserstein distance between Gaussian distributions having the same mean.

The BW distance enables us to measure the difference between two Gramians.
Our final goal is to design a feedback gain for Gramian shaping.
Using Lemmas~1 and~2, however, this problem can be separated into realizable Gramian design and gain derivation.
The first step is formulated as follows.
\begin{prob}[BW-based Gramian Shaping]\label{prob:Gramian_shaping_bw}
For the system \eqref{eq:outer_input_sys}, let $\bm{P}_d\in\mathbb{S}_{++}^n$ be a desired Gramian.
Under the BW distance \eqref{eq:bw_distance}, find an optimal realizable Gramian $\bm{P}^\ast$ by solving
\ifdefined\iflatexml
\begin{subequations}\label{eq:gramian_shaping_bw}
    \begin{align}
        \bm{P}^\ast=\underset{\bm{P}}{\arg\min}&~d_{\mathrm{BW}}^2(\bm{P},\bm{P}_d),\label{subeq:objective}\\
        \mathrm{s.t.}&~\bm{P}\in\mathbb{S}_{++}^n,\label{subeq:psd_const}\\
        &~\left(\bm{I}_n-\bm{BB}^\dagger \right)\left(\bm{A} \bm{P} + \bm{P}\bm{A}^\top \bm{DD}^\top\right)\left(\bm{I}_n-\bm{BB}^\dagger \right) = \bm{O}_n. \label{subeq:linear_const}
    \end{align}
\end{subequations}
\else
\begin{subequations}\label{eq:gramian_shaping_bw}
    \begin{align}
        \bm{P}^\ast=\underset{\bm{P}}{\arg\min}&~d_{\mathrm{BW}}^2(\bm{P},\bm{P}_d),\label{subeq:objective}\\
        \mathrm{s.t.}&~\bm{P}\in\mathbb{S}_{++}^n,\label{subeq:psd_const}\\
        &~\left(\bm{I}_n-\bm{BB}^\dagger \right)\left(\bm{A} \bm{P} + \bm{P}\bm{A}^\top \right.\nonumber\\
            &\left.+ \bm{DD}^\top\right)\left(\bm{I}_n-\bm{BB}^\dagger \right) = \bm{O}_n. \label{subeq:linear_const}
    \end{align}
\end{subequations}
\fi
\end{prob}
Constraint \eqref{subeq:psd_const} imposes positive definiteness of the Gramian, and constraint \eqref{subeq:linear_const} characterizes the set of Gramians realizable by the system (see Lemma~\ref{lem:gramian_theorem} in Appendix).
Thus, the controllability Gramian closest to the desired Gramian can be obtained.
The second step is to obtain a corresponding feedback gain $\bm{K}^\ast$ from $\bm{P}^\ast$ using Lemma~\ref{lem:gain_derive_for_gramian}.

We now prove that Problem~\ref{prob:Gramian_shaping_bw} is a strictly convex optimization problem.
We prove that the objective function of Problem~\ref{prob:Gramian_shaping_bw} is strictly convex.
\begin{theorem}\label{thm:bw_convex}
    For a fixed $\bm{P}_d\in\mathbb{S}_{++}^n$, $d_\mathrm{BW}^2(\bm{X},\bm{P}_d):\mathbb{S}_{++}^n\to\mathbb{R}_+$ is a strictly convex function on $\mathbb{S}_{++}^n$. 
\end{theorem}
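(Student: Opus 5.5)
Since $\mathrm{tr}(\bm X+\bm P_d)$ is affine in $\bm X$, the statement of Theorem~\ref{thm:bw_convex} is equivalent to showing that
\begin{align*}
f(\bm X)=\mathrm{tr}\left(\bm P_d^{\frac12}\bm X\bm P_d^{\frac12}\right)^{\frac12}
\end{align*}
is strictly concave on $\mathbb{S}_{++}^n$. I would first reduce by congruence. The map $\bm X\mapsto \bm Y=\bm P_d^{\frac12}\bm X\bm P_d^{\frac12}$ is an invertible linear bijection of $\mathbb{S}_{++}^n$ onto itself, because $\bm P_d\in\mathbb{S}_{++}^n$. Composing with an invertible linear map preserves strict concavity, so it suffices to show that $g(\bm Y)=\mathrm{tr}(\bm Y^{\frac12})$ is strictly concave on $\mathbb{S}_{++}^n$.

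For concavity of $g$, I would use the fact that $t\mapsto t^{1/2}$ is operator concave on $[0,\infty)$ (Löwner--Heinz). This gives $(\theta \bm Y_1+(1-\theta)\bm Y_2)^{\frac12}\succeq \theta \bm Y_1^{\frac12}+(1-\theta)\bm Y_2^{\frac12}$, and taking traces yields concavity. Alternatively, and more simply, I would invoke the standard trace-function result: $\mathrm{tr}\,\phi(\bm Y)$ is strictly concave on symmetric matrices with spectrum in an interval whenever $\phi$ is strictly concave on that interval. Here $\phi(t)=\sqrt t$ is strictly concave on $(0,\infty)$.

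The main obstacle is the \emph{strictness}, which does not follow directly from the Löwner order inequality. I would handle it with a second-derivative computation. Fix $\bm Y\in\mathbb{S}_{++}^n$ and a nonzero symmetric direction $\bm H$. Diagonalize $\bm Y=\bm U\bm\Lambda\bm U^\top$ and set $\tilde{\bm H}=\bm U^\top\bm H\bm U$. By the Daleckii--Krein formula,
\begin{align*}
\frac{d^2}{dt^2}\,\mathrm{tr}\,\phi(\bm Y+t\bm H)\Big|_{t=0}=\sum_{i,j}\phi^{[2]}(\lambda_i,\lambda_j,\lambda_j)\,\tilde h_{ij}^2 ,
\end{align*}
where $\phi^{[2]}$ is a second divided difference. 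For $\phi=\sqrt{\cdot}$ each second divided difference is strictly negative at positive points; explicitly, $\phi^{[2]}(a,b,b)=-1/\bigl(2\sqrt b(\sqrt a+\sqrt b)^2\bigr)$. Hence the second derivative is strictly negative whenever $\tilde{\bm H}\neq\bm O_n$.

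If invoking divided differences feels too heavy, I would fall back on the Lyapunov-equation route. Set $\bm S=\bm Y^{\frac12}$ and differentiate $\bm S^2=\bm Y$. The first derivative $\dot{\bm S}$ solves $\bm S\dot{\bm S}+\dot{\bm S}\bm S=\bm H$, and the second derivative solves $\bm S\ddot{\bm S}+\ddot{\bm S}\bm S=-2\dot{\bm S}^2$. Taking traces against $\bm S^{-1}$ gives $\mathrm{tr}\,\ddot{\bm S}=-\mathrm{tr}(\bm S^{-1}\dot{\bm S}^2)<0$ for $\dot{\bm S}\neq\bm O_n$, and $\dot{\bm S}\neq \bm O_n$ holds because $\bm H\neq\bm O_n$. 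A negative definite Hessian on the open convex set $\mathbb{S}_{++}^n$ gives strict concavity of $g$, hence of $f$. It follows that $d_{\mathrm{BW}}^2(\cdot,\bm P_d)=\mathrm{tr}\,\bm X+\mathrm{tr}\,\bm P_d-2f(\bm X)$ is strictly convex.
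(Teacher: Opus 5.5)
Your proposal is correct and follows the same route as the paper. You split off the affine part $\mathrm{tr}(\bm X+\bm P_d)$, pass to $\bm P_d^{\frac12}\bm X\bm P_d^{\frac12}$ by an injective congruence, and reduce to strict concavity of $\mathrm{tr}(\bm Y^{\frac12})$ on $\mathbb{S}_{++}^n$; the only difference is that the paper cites this last fact (Lemma~\ref{lem:concave}), whereas you prove it yourself by a second-derivative argument, which is sound (your Daleckii--Krein sum is missing an overall factor of $2$, which does not affect the sign).
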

\begin{proof}
Let $f(\bm{X})=d_\mathrm{BW}^2(\bm{X},\bm{P}_d)$. We verify that $f(\bm{X})$ satisfies the definition of a strictly convex function:
\begin{align}
    &tf(\bm{X})+(1-t)f(\bm{Y})> f(t\bm{X}+(1-t)\bm{Y}),\nonumber\\
    &~~~~~~~~~~~~~~~~~~~~~~~~~~~~~~~~~~~~~~~~~\forall t\in(0,1).
\end{align}
From Definition~\ref{def:bw_distance}, we have
\ifdefined\iflatexml
\begin{align}
    &tf(\bm{X})+(1-t)f(\bm{Y}) \nonumber\\=&\mathrm{tr}\left(t\bm{X}+(1-t)\bm{Y}+\bm{P}_d 
    -2t\left(\bm{P}_d^\frac{1}{2}\bm{X}\bm{P}_d^\frac{1}{2}\right)^\frac{1}{2}-2(1-t)\left(\bm{P}_d^\frac{1}{2}\bm{Y}\bm{P}_d^\frac{1}{2}\right)^\frac{1}{2}\right)\!,\\
    &f(t\bm{X}+(1-t)\bm{Y})\nonumber\\
    =&\mathrm{tr}\left(t\bm{X}+(1-t)\bm{Y}+\bm{P}_d 
    2\left(t\bm{P}_d^\frac{1}{2}\bm{X}\bm{P}_d^\frac{1}{2}+(1-t)\bm{P}_d^\frac{1}{2}\bm{Y}\bm{P}_d^\frac{1}{2}\right)^\frac{1}{2}\right).
\end{align}
\else
\begin{align}
    tf(\bm{X})&+(1-t)f(\bm{Y}) \nonumber\\=&\mathrm{tr}\Biggl(t\bm{X}+(1-t)\bm{Y}+\bm{P}_d \nonumber\\
    &-2t\left(\bm{P}_d^\frac{1}{2}\bm{X}\bm{P}_d^\frac{1}{2}\right)^\frac{1}{2}\left.\!\!\!-2(1-t)\left(\bm{P}_d^\frac{1}{2}\bm{Y}\bm{P}_d^\frac{1}{2}\right)^\frac{1}{2}\right)\!,\\
    f(t\bm{X}&+(1-t)\bm{Y})\nonumber\\
    =&\mathrm{tr}\Biggl(t\bm{X}+(1-t)\bm{Y}+\bm{P}_d \nonumber\\
    &-2\left(t\bm{P}_d^\frac{1}{2}\bm{X}\bm{P}_d^\frac{1}{2}\right.\left.\left.+(1-t)\bm{P}_d^\frac{1}{2}\bm{Y}\bm{P}_d^\frac{1}{2}\right)^\frac{1}{2}\right).
\end{align}
\fi
Subtracting the latter from the former yields
\begin{align}
    tf(\bm{X})+&(1-t)f(\bm{Y}) - f(t\bm{X}+(1-t)\bm{Y})\nonumber\\
    =&2\mathrm{tr}\!\!\left(\left(t\bm{P}_d^\frac{1}{2}\bm{X}\bm{P}_d^\frac{1}{2}+(1-t)\bm{P}_d^\frac{1}{2}\bm{Y}\bm{P}_d^\frac{1}{2}\right)^\frac{1}{2} \right) \nonumber\\
    -& 2\mathrm{tr}\!\!\left(t\!\left(\bm{P}_d^\frac{1}{2}\bm{X}\bm{P}_d^\frac{1}{2}\right)^\frac{1}{2}\!\!\!+\!(1-t)\!\left(\bm{P}_d^\frac{1}{2}\bm{Y}\bm{P}_d^\frac{1}{2}\right)^\frac{1}{2}\!\right)\!.
\end{align}
Let $\bm{X}^\prime = \bm{P}_d^\frac{1}{2}\bm{X}\bm{P}_d^\frac{1}{2}$ and $\bm{Y}^\prime = \bm{P}_d^\frac{1}{2}\bm{Y}\bm{P}_d^\frac{1}{2}$, then this equation can be rewritten as
\begin{align}\label{eq:totyuushiki}
    tf(\bm{X})+&(1-t)f(\bm{Y}) - f(t\bm{X}+(1-t)\bm{Y})\nonumber\\
    =&2\mathrm{tr}\left(\left(t\bm{X}^\prime+(1-t)\bm{Y}^\prime\right)^\frac{1}{2} \right) \nonumber\\
    &- 2\left(t\mathrm{tr}\left(\bm{X}^{\prime\frac{1}{2}}\right)+(1-t)\mathrm{tr}\left(\bm{Y}^{\prime\frac{1}{2}}\right)\right).
\end{align}
We now examine the sign of the left-hand side. 
From Lemma~\ref{lem:concave} in Appendix, this function $\mathrm{tr}(\bm{X}^{\frac{1}{2}})$ is strictly concave on $\mathbb{S}_{++}^n$. That is,
\begin{align}
    &\mathrm{tr}\left(\left(t\bm{X}^\prime+(1-t)\bm{Y}^\prime\right)^\frac{1}{2} \right) \nonumber\\
    &- \left(t\mathrm{tr}\left(\bm{X}^{\prime\frac{1}{2}}\right)+(1-t)\mathrm{tr}\left(\bm{Y}^{\prime\frac{1}{2}}\right)\right)> 0,\nonumber\\
    &~~~~~~~~~~~~~~~~~~~~~~~~~~~~~~~~~~~~\forall t\in(0,1).
\end{align}
Since the right-hand side of \eqref{eq:totyuushiki} is strictly positive, it follows that
\begin{align}
    &tf(\bm{X})+(1-t)f(\bm{Y}) - f(t\bm{X}+(1-t)\bm{Y})> 0,\nonumber\\
    &~~~~~~~~~~~~~~~~~~~~~~~~~~~~~~~~~~~~~~~~~~~~~\forall t\in(0,1).
\end{align}
We therefore conclude that $f(\bm{X})$ is strictly convex on $\mathbb{S}_{++}^n$.
\end{proof}
Using Theorem~\ref{thm:bw_convex}, we can show that Problem~\ref{prob:Gramian_shaping_bw} is a strictly convex optimization problem.
\begin{theorem}\label{thm:main_convexity}
Problem~\eqref{eq:gramian_shaping_bw} is a convex optimization problem with a strictly convex objective function. Hence, if an optimal solution exists, it is unique.
\end{theorem}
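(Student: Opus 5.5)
We prove Theorem~\ref{thm:main_convexity} in three steps: show the feasible set is convex, invoke Theorem~\ref{thm:bw_convex} for the objective, and then use a standard midpoint argument for uniqueness.

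\emph{Feasible set.} The constraint \eqref{subeq:psd_const} defines $\mathbb{S}_{++}^n$, which is a convex cone. If $\bm{X},\bm{Y}\succ 0$ and $t\in[0,1]$, then $t\bm{X}+(1-t)\bm{Y}\succ 0$. The constraint \eqref{subeq:linear_const} is an affine equation in $\bm{P}$: the map
\[
\bm{P}\mapsto (\bm{I}_n-\bm{BB}^\dagger)(\bm{A}\bm{P}+\bm{P}\bm{A}^\top)(\bm{I}_n-\bm{BB}^\dagger)
\]
is linear, and the term involving $\bm{DD}^\top$ is a constant. Its solution set is therefore an affine subspace of the symmetric matrices, which is convex. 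The feasible set $\mathcal{F}$ is the intersection of these two sets, so it is convex.

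\emph{Objective.} By Theorem~\ref{thm:bw_convex}, $f(\bm{P})=d_{\mathrm{BW}}^2(\bm{P},\bm{P}_d)$ is strictly convex on $\mathbb{S}_{++}^n$, and hence on $\mathcal{F}\subset\mathbb{S}_{++}^n$. Problem~\eqref{eq:gramian_shaping_bw} therefore minimizes a strictly convex function over a convex set, so it is a convex optimization problem.

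\emph{Uniqueness.} Suppose $\bm{P}_1^\ast\neq\bm{P}_2^\ast$ are both optimal, with common value $f^\ast$. Their midpoint $\bar{\bm{P}}=\tfrac12(\bm{P}_1^\ast+\bm{P}_2^\ast)$ lies in $\mathcal{F}$ by convexity. Strict convexity with $t=\tfrac12$ gives
\[
f(\bar{\bm{P}})<\tfrac12 f(\bm{P}_1^\ast)+\tfrac12 f(\bm{P}_2^\ast)=f^\ast,
\]
which contradicts optimality. Hence any optimal solution is unique.

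The only point needing care is that the strict inequality in Theorem~\ref{thm:bw_convex} is meant for $\bm{X}\neq\bm{Y}$. This rests on the strict concavity in Lemma~\ref{lem:concave}, which applies to $\bm{X}'\neq\bm{Y}'$. The congruence $\bm{X}\mapsto\bm{P}_d^{1/2}\bm{X}\bm{P}_d^{1/2}$ is injective because $\bm{P}_d\succ 0$, so $\bm{X}\neq\bm{Y}$ implies $\bm{X}'\neq\bm{Y}'$.

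Existence of a minimizer is not claimed. It could fail, since $\mathcal{F}$ is not closed at the boundary of $\mathbb{S}_{++}^n$, which is why the statement is conditional.
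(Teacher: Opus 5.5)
Your proposal is correct and follows the paper's proof. Like the paper, you get a convex feasible set from the convexity of $\mathbb{S}_{++}^n$ and the affineness of \eqref{subeq:linear_const}, strict convexity of the objective from Theorem~\ref{thm:bw_convex}, and uniqueness from strict convexity. You also make explicit two points the paper leaves implicit: the midpoint argument for uniqueness, and the injectivity of $\bm{X}\mapsto\bm{P}_d^{1/2}\bm{X}\bm{P}_d^{1/2}$, which guarantees that the strict inequality of Lemma~\ref{lem:concave} (stated for $\bm{X}'\neq\bm{Y}'$) applies whenever $\bm{X}\neq\bm{Y}$.
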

\begin{proof}
The feasible set is convex because $\mathbb{S}_{++}^n$ is convex and the constraint \eqref{subeq:linear_const} is affine in $\bm{P}$.
By Theorem~\ref{thm:bw_convex}, the objective function $d_{\mathrm{BW}}^2(\bm{P},\bm{P}_d)$ is strictly convex on $\mathbb{S}_{++}^n$.
Therefore, Problem~\ref{prob:Gramian_shaping_bw} is a convex optimization problem with a strictly convex objective function.
Hence, if an optimal solution exists, it is unique.
\end{proof}

In the AIR-based approach \cite{Nishimoto2024}, the optimization problem is nonconvex.
This may cause dependence on the initial value, convergence to a local minimum, and make the calculation inefficient.
By contrast, when the BW distance is used, the problem becomes convex, and any converged solution of the solver is guaranteed to be globally optimal.
This provides Gramian shaping with the robustness of the optimization problem for which the mathematically best solution is guaranteed.

\section{Solving Gramian Shaping \\as Semidefinite Programming}
This section explains how to solve the Gramian shaping problem numerically.
As a naive approach, one can first solve Problem~\ref{prob:Gramian_shaping_bw} directly and then determine the gain via Lemma~\ref{lem:gain_derive_for_gramian}.
On the other hand, by exploiting the properties of the BW distance, we can obtain a more efficient optimization formulation.

We first reconsider Problem~\ref{prob:Gramian_shaping_bw} in a form that explicitly includes gain determination.
\begin{cor}
    The solution $\bm{P}^\ast\in\mathbb{S}_{++}^n$ to the optimization problem \eqref{eq:gramian_shaping_bw} and the corresponding gain $\bm{K}$ can be obtained by solving the optimization problem:
    \begin{subequations}\label{eq:bw_gramain_gain_problem}
    \begin{align}
      \min_{\bm{P},\bm{K}}&~d_\mathrm{BW}^2(\bm{P},\bm{P}_d)\label{subeq:objective_neo},\\
      \mathrm{s.t.}&~\bm{P}\in\mathbb{S}_{++}^n,\label{subeq:SPD_const}\\
      &~(\bm{A}\!+\!\bm{B}\bm{K})\bm{P} \!+\! \bm{P}(\bm{A}\!+\!\bm{B}\bm{K})^\top \!+\! \bm{D}\bm{D}^\top\!=\!\bm{O}_n.\label{subeq:lyapunov_const}
    \end{align}
  \end{subequations}
\end{cor}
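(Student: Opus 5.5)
The plan is to show that the feasible sets of \eqref{eq:gramian_shaping_bw} and \eqref{eq:bw_gramain_gain_problem} coincide in the $\bm{P}$-variable. Concretely, define the set $\mathcal{F}_1$ of $\bm{P}\in\mathbb{S}_{++}^n$ satisfying \eqref{subeq:linear_const}. Define $\mathcal{F}_2$ as the projection onto the $\bm{P}$-coordinate of the set of pairs $(\bm{P},\bm{K})$ satisfying \eqref{subeq:SPD_const} and \eqref{subeq:lyapunov_const}. I would prove $\mathcal{F}_1=\mathcal{F}_2$. The objective \eqref{subeq:objective_neo} depends only on $\bm{P}$, so minimizing over $(\bm{P},\bm{K})$ is the same as minimizing $d_{\mathrm{BW}}^2(\bm{P},\bm{P}_d)$ over $\mathcal{F}_2=\mathcal{F}_1$. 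The two problems then have the same optimal value and the same optimal $\bm{P}$, which is $\bm{P}^\ast$ and is unique by Theorem~\ref{thm:main_convexity}. Any $\bm{K}$ paired with $\bm{P}^\ast$ at the optimum is a gain realizing it.

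For the inclusion $\mathcal{F}_2\subseteq\mathcal{F}_1$, take $(\bm{P},\bm{K})$ satisfying \eqref{subeq:lyapunov_const}. Multiply the equation on both sides by the orthogonal projector $\bm{\Pi}=\bm{I}_n-\bm{B}\bm{B}^\dagger$, which is symmetric and satisfies $\bm{\Pi}\bm{B}=\bm{O}$. The terms $\bm{B}\bm{K}\bm{P}$ and $\bm{P}\bm{K}^\top\bm{B}^\top$ are annihilated, and \eqref{subeq:linear_const} remains. This direction is also the necessity part of Lemma~\ref{lem:gramian_theorem}, so I would simply cite it.

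For $\mathcal{F}_1\subseteq\mathcal{F}_2$, take $\bm{P}\in\mathbb{S}_{++}^n$ satisfying \eqref{subeq:linear_const}. By Lemma~\ref{lem:gramian_theorem}, $\bm{P}$ is realizable, and Lemma~\ref{lem:gain_derive_for_gramian} supplies an explicit gain $\bm{K}$ for which \eqref{subeq:lyapunov_const} holds. Hence $(\bm{P},\bm{K})$ is feasible for \eqref{eq:bw_gramain_gain_problem}. This step is the main obstacle: the whole argument rests on Lemma~\ref{lem:gain_derive_for_gramian} actually solving the Lyapunov equation for every $\bm{P}$ meeting the projected constraint.

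If I had to verify that step myself, I would set $\bm{M}=\bm{A}\bm{P}+\bm{P}\bm{A}^\top+\bm{D}\bm{D}^\top$, which is symmetric with $\bm{\Pi}\bm{M}\bm{\Pi}=\bm{O}$, so that
\begin{align}
\bm{M}=\bm{B}\bm{B}^\dagger\bm{M}+\bm{\Pi}\bm{M}\bm{B}\bm{B}^\dagger .
\end{align}
I would then look for $\bm{K}=\bm{L}\bm{P}^{-1}$ with $\bm{B}\bm{L}+\bm{L}^\top\bm{B}^\top=-\bm{M}$. A candidate is
\begin{align}
\bm{L}=-\bm{B}^\dagger\bigl(\bm{M}-\tfrac12\bm{B}\bm{B}^\dagger\bm{M}\bm{B}\bm{B}^\dagger\bigr),
\end{align}
and I would check that it reproduces $-\bm{M}$ using the decomposition above. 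Finally, stability of $\bm{A}+\bm{B}\bm{K}$ follows from the Lyapunov equation with $\bm{P}\succ0$ and controllability of $(\bm{A},\bm{D})$, which is preserved under the state feedback. This also makes $\bm{P}$ coincide with the Gramian \eqref{eq:unsta_Gram} for this $\bm{K}$.
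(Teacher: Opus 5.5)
Your main line is the paper's own argument: the paper simply says the corollary follows from Lemmas~\ref{lem:gramian_theorem} and~\ref{lem:gain_derive_for_gramian}, i.e.\ both problems have the same feasible set in $\bm{P}$ and the objective does not involve $\bm{K}$. Your explicit $\bm{L}$ is correct. With $\bm{Q}=\bm{B}\bm{B}^\dagger$ one gets $\bm{B}\bm{L}+\bm{L}^\top\bm{B}^\top=-\bm{Q}\bm{M}-\bm{M}\bm{Q}+\bm{Q}\bm{M}\bm{Q}=-\bm{M}$ by your decomposition. It is the member of Lemma~\ref{lem:gain_derive_for_gramian}'s family with vanishing $\bm{S}_{d22}$, and it makes $\mathcal{F}_1\subseteq\mathcal{F}_2$ self-contained. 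That already proves the corollary as stated, because neither problem contains a stability constraint.

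Your last paragraph, however, is wrong and should be dropped. State feedback through $\bm{B}$ preserves controllability of $(\bm{A},\bm{B})$, not of $(\bm{A},\bm{D})$. The Lyapunov equation with $\bm{P}\succ0$ only gives, for a left eigenvector $\bm{w}$ of $\bm{A}+\bm{B}\bm{K}$ with eigenvalue $\lambda$, $2\,\mathrm{Re}(\lambda)\,\bm{w}^H\bm{P}\bm{w}=-\|\bm{D}^\top\bm{w}\|^2$. So imaginary-axis eigenvalues survive for any mode the feedback makes uncontrollable from $\bm{D}$. Concretely, take
\[
\bm{A}=\begin{bmatrix}-3/2&0\\-1&0\end{bmatrix},\quad \bm{B}=\begin{bmatrix}1\\1\end{bmatrix},\quad \bm{D}=\begin{bmatrix}1\\0\end{bmatrix},\quad \bm{P}=\bm{I}_2 .
\]
Both $(\bm{A},\bm{B})$ and $(\bm{A},\bm{D})$ are controllable. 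Then $\bm{M}=\begin{bmatrix}-2&-1\\-1&0\end{bmatrix}$ satisfies $\bm{\Pi}\bm{M}\bm{\Pi}=\bm{O}_2$, and your formula gives $\bm{K}=[1~~0]$. But $\bm{A}+\bm{B}\bm{K}=\mathrm{diag}(-1/2,0)$ is not Hurwitz. Since $m=1$, this is the only gain solving the Lyapunov equation for this $\bm{P}$: a homogeneous solution would make $\bm{B}\bm{L}_0$ skew of rank at most one, hence zero, and $\bm{B}$ is injective. Choosing $\bm{P}_d=\bm{I}_2$, the optimum of both problems is $\bm{P}^\ast=\bm{I}_2$ (distance zero), yet the integral \eqref{eq:unsta_Gram} for its only gain equals $\mathrm{diag}(1,0)\neq\bm{P}^\ast$.

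So the equivalence of the two optimization problems stands. What your argument cannot deliver is the stronger reading that $\bm{P}^\ast$ is the controllability Gramian of a stabilized loop. Citing Lemma~\ref{lem:gramian_theorem} for ``realizable'' does not rescue it either: the same example shows its ``if'' direction, read as existence of a stabilizing gain, needs an extra hypothesis. Examples would be $\bm{D}\bm{D}^\top\succ0$, or an a posteriori check that $\bm{A}+\bm{B}\bm{K}^\ast$ is Hurwitz.
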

\begin{proof}
    This follows immediately from Lemmas~\ref{lem:gramian_theorem} and Lemma~\ref{lem:gain_derive_for_gramian} in Appendix.
\end{proof}

Using this LMI representation of the BW distance, we derive an SDP formulation of the Gramian shaping problem.
\begin{prob}[BW-based Gramian Shaping with LMIs]\label{prob:sdp_gramian_shaping}
For the system \eqref{eq:outer_input_sys}, let $\bm{P}_d\in\mathbb{S}_{++}^n$ be a desired Gramian.
Using an LMI representation of the BW distance \eqref{eq:bw_distance}, find an optimal realizable Gramian $\bm{P}^\ast$ by solving
\begin{subequations}\label{eq:bw_gramain_gain_problem_all}
    \begin{align}
      (\bm{P}^\ast,\bm{E}^\ast,\bm{U}^\ast)=\underset{\bm{P},\bm{E},\bm{U}}{\arg\min}&~\mathrm{tr}\left(\bm{P}+\bm{P}_d-2\bm{U}\right),\\
      \mathrm{s.t.}&~\bm{P}\in\mathbb{S}_{++}^n,\\
      &~\begin{bmatrix}
          \bm{P}&\bm{U}\\
          \bm{U}^\top&\bm{P}_d
        \end{bmatrix}\succeq 0,\\
      &~\bm{A}\bm{P}\!+\!\bm{B}\bm{E}\!+\!\bm{P}\bm{A}^\top \!+\!\bm{E}^\top\bm{B}^\top \nonumber\\
      &~~~~\!+\! \bm{D}\bm{D}^\top\!=\!\bm{O}_n.
    \end{align}
\end{subequations}
\end{prob}
A corresponding feedback gain that achieves the optimal realizable Gramian is obtained as
\begin{align}
    \bm{K}^\ast=\bm{E}^\ast\bm{P}^{\ast^{-1}}.
\end{align}
Problem~\ref{prob:sdp_gramian_shaping} can be solved numerically as an SDP.
In practice, the condition $\bm{P}\in\mathbb{S}_{++}^n$ is typically imposed as $\bm{P}\succeq\epsilon\bm{I}$ with a sufficiently small positive scalar $\epsilon$.

\begin{theorem}
    The solution to the optimization problem \eqref{eq:gramian_shaping_bw} can be obtained by solving the problem~\eqref{eq:bw_gramain_gain_problem_all}. 
\end{theorem}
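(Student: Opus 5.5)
Two facts are combined. The first is the standard change of variables $\bm{E}=\bm{K}\bm{P}$, which linearizes the Lyapunov constraint. The second is the LMI characterization of the fidelity term:
\begin{align*}
\mathrm{tr}\left(\left(\bm{P}_d^{\frac12}\bm{P}\bm{P}_d^{\frac12}\right)^{\frac12}\right)=\max\left\{\mathrm{tr}(\bm{U}) \;:\; \begin{bmatrix}\bm{P}&\bm{U}\\\bm{U}^\top&\bm{P}_d\end{bmatrix}\succeq 0\right\}.
\end{align*}
With this identity, for fixed $\bm{P}$ the inner minimization of $\mathrm{tr}(\bm{P}+\bm{P}_d-2\bm{U})$ over $\bm{U}$ returns exactly $d_{\mathrm{BW}}^2(\bm{P},\bm{P}_d)$.

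\textbf{Step 1 (constraints).} By the Corollary preceding Problem~\ref{prob:sdp_gramian_shaping}, problem \eqref{eq:gramian_shaping_bw} is equivalent to \eqref{eq:bw_gramain_gain_problem} over pairs $(\bm{P},\bm{K})$. Since $\bm{P}\succ 0$ is invertible, the map $(\bm{P},\bm{K})\mapsto(\bm{P},\bm{E})=(\bm{P},\bm{K}\bm{P})$ is a bijection, with inverse $\bm{K}=\bm{E}\bm{P}^{-1}$. Under this map, \eqref{subeq:lyapunov_const} becomes exactly the affine constraint $\bm{A}\bm{P}+\bm{B}\bm{E}+\bm{P}\bm{A}^\top+\bm{E}^\top\bm{B}^\top+\bm{D}\bm{D}^\top=\bm{O}_n$. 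Hence the $\bm{P}$-projection of the feasible set of \eqref{eq:bw_gramain_gain_problem_all} coincides with the feasible set of \eqref{eq:gramian_shaping_bw}, and $\bm{K}^\ast=\bm{E}^\ast\bm{P}^{\ast-1}$ recovers the gain.

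\textbf{Step 2 (objective; the main obstacle).} Fix $\bm{P}\succ0$.
\begin{itemize}
\item[(i)] Feasibility and a trace bound. Since $\bm{P}_d\succ0$, the Schur complement shows the block matrix is PSD iff $\bm{P}-\bm{U}\bm{P}_d^{-1}\bm{U}^\top\succeq0$. Equivalently, $\bm{U}=\bm{P}^{\frac12}\bm{W}\bm{P}_d^{\frac12}$ for some contraction $\|\bm{W}\|\le1$. Then $\mathrm{tr}(\bm{U})=\mathrm{tr}(\bm{W}\bm{P}_d^{\frac12}\bm{P}^{\frac12})$. By von Neumann's trace inequality this is at most the sum of singular values of $\bm{P}_d^{\frac12}\bm{P}^{\frac12}$, which is $\mathrm{tr}\bigl((\bm{P}_d^{\frac12}\bm{P}\bm{P}_d^{\frac12})^{\frac12}\bigr)$.
\item[(ii)] Attainment. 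Take the polar decomposition $\bm{P}_d^{\frac12}\bm{P}^{\frac12}=\bm{Q}\bigl(\bm{P}^{\frac12}\bm{P}_d\bm{P}^{\frac12}\bigr)^{\frac12}$ and set $\bm{W}=\bm{Q}^\top$. This gives $\mathrm{tr}(\bm{W}\bm{P}_d^{\frac12}\bm{P}^{\frac12})=\mathrm{tr}\bigl((\bm{P}^{\frac12}\bm{P}_d\bm{P}^{\frac12})^{\frac12}\bigr)$. This equals $\mathrm{tr}\bigl((\bm{P}_d^{\frac12}\bm{P}\bm{P}_d^{\frac12})^{\frac12}\bigr)$, because the two matrices under the square root share eigenvalues. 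The maximizer can be written explicitly as $\bm{U}=\bm{P}^{\frac12}\bm{Q}^\top\bm{P}_d^{\frac12}$, which one can also check directly.
\end{itemize}
These two facts constitute the real content of the theorem. I would present the Schur complement / contraction argument first, since it gives both bounds cleanly.

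\textbf{Step 3 (conclusion).} By Step 2,
\begin{align*}
\min_{\bm{P},\bm{E},\bm{U}}\mathrm{tr}(\bm{P}+\bm{P}_d-2\bm{U})=\min_{\bm{P},\bm{E}}d_{\mathrm{BW}}^2(\bm{P},\bm{P}_d).
\end{align*}
By Step 1, the right-hand side equals the optimal value of \eqref{eq:gramian_shaping_bw}.

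Now let $(\bm{P}^\ast,\bm{E}^\ast,\bm{U}^\ast)$ be optimal for \eqref{eq:bw_gramain_gain_problem_all}. Then $\mathrm{tr}(\bm{P}^\ast+\bm{P}_d-2\bm{U}^\ast)\ge d_{\mathrm{BW}}^2(\bm{P}^\ast,\bm{P}_d)$, and this is at least the optimal value of \eqref{eq:gramian_shaping_bw}, since $\bm{P}^\ast$ is feasible for it. Since the two optimal values coincide, equality holds throughout, so $\bm{P}^\ast$ is optimal for \eqref{eq:gramian_shaping_bw}. By Theorem~\ref{thm:main_convexity}, $\bm{P}^\ast$ is then its unique solution.

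Conversely, suppose $\bm{P}^\ast$ solves \eqref{eq:gramian_shaping_bw}. Pair it with a gain from Lemma~\ref{lem:gain_derive_for_gramian}, giving $\bm{E}^\ast=\bm{K}\bm{P}^\ast$, and with the explicit maximizer $\bm{U}$ from Step 2. This triple is optimal for \eqref{eq:bw_gramain_gain_problem_all}.
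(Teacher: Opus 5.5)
Your proof is correct and follows the paper's skeleton. The change of variables $\bm{E}=\bm{K}\bm{P}$ linearizes the Lyapunov constraint, and the inner minimization over $\bm{U}$ is identified with $d_{\mathrm{BW}}^2(\bm{P},\bm{P}_d)$. The differences lie in how you handle the two supporting facts.

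First, the paper simply cites the LMI representation of the BW distance (Lemma~\ref{lem:bures_min}), whereas you prove it. The Schur complement reduces the block constraint to $\bm{U}=\bm{P}^{\frac12}\bm{W}\bm{P}_d^{\frac12}$ with $\|\bm{W}\|\le 1$. Von Neumann's inequality then bounds $\mathrm{tr}(\bm{U})$ by the sum of singular values of $\bm{P}_d^{\frac12}\bm{P}^{\frac12}$, and its polar factor attains the bound.

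Second, the paper invokes the min--min interchange (Lemma~\ref{lem:minmin}), which is stated for product sets and assumes the minimum exists. You instead compare optimal values directly and transfer optimal points in both directions. That is the more careful route here, for two reasons. The block constraint couples $\bm{U}$ to $\bm{P}$, so the feasible set is not a product. And because $\bm{P}\in\mathbb{S}_{++}^n$ is an open condition, attainment cannot simply be presupposed. Your argument only needs the two optimal values to agree as infima, which holds unconditionally; read your displayed ``$\min$'' identity that way. It then shows that a minimizer of either problem yields a minimizer of the other.

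The paper's version is shorter because it delegates these points to the cited lemmas.
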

\begin{proof}
Since the optimal solution $\bm{P}^\ast$ of \eqref{eq:gramian_shaping_bw} coincides with that of \eqref{eq:bw_gramain_gain_problem}, it suffices to prove the equivalence between \eqref{eq:bw_gramain_gain_problem} and \eqref{eq:bw_gramain_gain_problem_all}.
First, because $\bm{P}\in\mathbb{S}_{++}^n$, $\bm{P}^{-1}$ exists.
Hence, under the change of variables $\bm{E}=\bm{K}\bm{P}$, the Lyapunov constraint \eqref{subeq:lyapunov_const} is equivalently rewritten as
\begin{align}
    \bm{A}\bm{P}+\bm{B}\bm{E}+\bm{P}\bm{A}^\top+\bm{E}^\top\bm{B}^\top+\bm{D}\bm{D}^\top=\bm{O}_n.
\end{align}
Conversely, for any $(\bm{P},\bm{E})$, setting $\bm{K}=\bm{E}\bm{P}^{-1}$ recovers the original constraint.
Therefore, the formulations using $(\bm{P},\bm{K})$ and $(\bm{P},\bm{E})$ are equivalent.
Moreover, by Lemma~\ref{lem:bures_min} in Appendix, the optimization problem \eqref{eq:bw_gramain_gain_problem} can be rewritten as
\begin{subequations}\label{eq:bw_gramain_gain_problem_partial}
    \begin{align}
      \min_{\bm{P},\bm{E}}&~\left(\begin{array}{c}
          \underset{\bm{U}}{\min}~\mathrm{tr}\left(\bm{P}+\bm{P}_d-2\bm{U}\right),\\
          \mathrm{s.t.}\begin{bmatrix}
          \bm{P}&\bm{U}\\
          \bm{U}^\top&\bm{P}_d
        \end{bmatrix}\succeq 0
      \end{array}\right)\\
      \mathrm{s.t.}&~\bm{P}\in\mathbb{S}_{++}^n,\\
            &~\bm{A}\bm{P}\!+\!\bm{B}\bm{E} \!+\!\bm{P}\bm{A}^\top\!+\!\bm{E}^\top\bm{B}^\top \!+\! \bm{D}\bm{D}^\top\!=\!\bm{O}_n.
    \end{align}
\end{subequations}
By Lemma~\ref{lem:minmin} in Appendix, partial minimization with respect to distinct optimization variables is equivalent to simultaneous optimization.
Hence, \eqref{eq:bw_gramain_gain_problem_partial} and \eqref{eq:bw_gramain_gain_problem_all} are equivalent, and the optimal solutions of \eqref{eq:gramian_shaping_bw} and \eqref{eq:bw_gramain_gain_problem_all} coincide.

\end{proof}

We conclude that the Gramian shaping problem can be solved as an SDP.
Since many fast SDP solvers are available, the problem is numerically tractable.
Moreover, because the problem admits an SDP formulation, additional LMI constraints can be incorporated into the Gramian design.
Since many control constraints, such as upper bounds on Gramian entries and norm bounds on the internal control input \cite{LMI_in_system_and_control}, can be expressed as LMIs, this SDP-based Gramian shaping significantly improves the extensibility of the design framework.

\section{Connection between Gramian Shaping \\and $H_2$ control}\label{sec:H2_control}
This section discusses the relation between the proposed Gramian shaping framework and existing control theory.
In particular, we focus on its connection to $H_2$ control.

In this section, the exogenous input $\bm v\in\mathbb{R}^r$ in \eqref{eq:stab_syst} is assumed to be zero-mean Gaussian white noise.
That is,
\begin{align}
    \mathbb{E}[\bm v(t)] &= \bm 0_r,\qquad
    \mathbb{E}[\bm v(t)\bm v(\tau)^\top] = \bm I_r\,\delta(t-\tau)
\end{align}
where $\delta(t):\mathbb{R}\to\mathbb{R}$ denotes the Dirac delta function.
Then the state $\bm x(t)$ becomes a stochastic process, and if the closed-loop matrix $\bm A+\bm B\bm K$ is Hurwitz stable, the stationary covariance
\begin{align}
    \bm P_w := \lim_{t\to\infty}\mathbb{E}\!\left[\bm x(t)\bm x(t)^\top\right]
\end{align}
exists.
Moreover, $\bm P_w$ is the unique positive-semidefinite solution of the Lyapunov equation
\begin{align}
    (\bm A+\bm B\bm K)\bm P_w + \bm P_w(\bm A+\bm B\bm K)^\top + \bm D\bm D^\top = \bm O_n
    \label{eq:cov_lyap}
\end{align}
and is expressed as
\begin{align}
    \bm P_w
    = \int_{0}^{\infty} e^{\bm (\bm A+\bm B\bm K) \tau}\,\bm D\bm D^\top\,e^{\bm (\bm A+\bm B\bm K)^\top \tau}\,d\tau
    \label{eq:cov_gramian_int}
\end{align}
Therefore, when $\bm{v}$ is Gaussian white noise, the state covariance coincides mathematically with the controllability Gramian in Definition~\ref{def:unsta_Gram}:
\begin{align}
    \bm P_w=\bm{P}.
\end{align}

We now consider $H_2$ control for the system \eqref{eq:outer_input_sys}.
$H_2$ control is based on the $H_2$ norm of the transfer function $\bm{G}(s)$ from the disturbance $\bm{v}$ to the state $\bm{x}$.
\begin{align}
    \bm{X}(s)&=\bm{G}(s)\bm{W}(s)\\
    \bm{G}(s)&=\left(s\bm{I}_n-\bm{A}-\bm{B}\bm{K}\right)^{-1}\bm{D}\\
    \left\|\bm{G}\right\|_2 &= \sqrt{\frac{1}{2\pi}\int_{-\infty}^{\infty}\mathrm{tr}\left(\bm{G}^H(jw)\bm{G}(jw)\right)dw}\label{eq:H2_norm}
\end{align}
$H_2$ control is defined as the optimization problem of finding the gain $\bm{K}$ that minimizes the $H_2$ norm of $\bm{G}(s)$.
\begin{subequations}\label{eq:H2_origin}
    \begin{align}
        \min_{\bm K}~&\left\|\bm G\right\|_2^2\\
        \mathrm{s.t.}~&\bm{G}=\left[\begin{array}{c|c}
        \bm{A}+\bm{B}\bm{K}&\bm{D}\\\hline
        \bm{I}_n&\bm{O}_{n\times r}
        \end{array}\right]
    \end{align}
\end{subequations}

The $H_2$ control \eqref{eq:H2_origin} can be solved as an SDP.
To this end, the $H_2$ norm is converted into a time-domain expression via Parseval's theorem.
\begin{prop}[Sec.~4.10~\cite{Multivariable_Feedback_Control_Analysis_and_Design}]
    For the system \eqref{eq:outer_input_sys}, the $H_2$ norm \eqref{eq:H2_norm} of the transfer function $\bm{G}$ from the disturbance $\bm{v}$ to the state $\bm{x}$ is given by
    \begin{align}
        \left\|\bm{G}\right\|_2 = \sqrt{\mathrm{tr}\left(\bm{P}_w\right)}.
    \end{align}
\end{prop}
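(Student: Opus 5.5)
The plan is to compute $\|\bm G\|_2^2$ from the frequency-domain definition \eqref{eq:H2_norm} and move it to the time domain with Parseval's theorem. With $\bm A_K:=\bm A+\bm B\bm K$ assumed Hurwitz, the transfer function $\bm G(s)=(s\bm I_n-\bm A_K)^{-1}\bm D$ is the Laplace transform of the impulse response
\begin{align}
    \bm g(t)=e^{\bm A_K t}\bm D,\qquad t\ge 0,
\end{align}
with $\bm g(t)=\bm O_{n\times r}$ for $t<0$. Hurwitz stability makes every entry of $\bm g$ exponentially decaying, so $\bm g\in L^1\cap L^2$. Also, $\bm G(j\omega)$ is the Fourier transform of $\bm g$, since the imaginary axis lies in the region of convergence.

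Next, rewrite the trace so that the Frobenius norm appears: $\mathrm{tr}(\bm G^H(j\omega)\bm G(j\omega))=\sum_{i,k}|G_{ik}(j\omega)|^2$. Applying the scalar Plancherel identity entrywise, $\frac{1}{2\pi}\int_{-\infty}^{\infty}|\hat g_{ik}(\omega)|^2d\omega=\int_{-\infty}^{\infty}|g_{ik}(t)|^2dt$, and summing over $i,k$ gives
\begin{align}
    \|\bm G\|_2^2=\int_0^\infty \mathrm{tr}\left(\bm g(t)^\top\bm g(t)\right)dt
    =\int_0^\infty \mathrm{tr}\left(e^{\bm A_K t}\bm D\bm D^\top e^{\bm A_K^\top t}\right)dt,
\end{align}
where the last step uses cyclicity of the trace.

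Finally, exchange the trace (a finite linear operation) with the convergent integral. This produces $\mathrm{tr}$ of the integral in \eqref{eq:cov_gramian_int}, which is exactly $\mathrm{tr}(\bm P_w)$. Taking the square root, which is legitimate because both sides are nonnegative, yields the claim. As a cross-check, $\bm P_w$ coincides with the unique solution of \eqref{eq:cov_lyap}, consistent with Definition~\ref{def:unsta_Gram}.

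The only delicate step is justifying Parseval's theorem, namely the square-integrability of $\bm g$ and the identification of $\bm G(j\omega)$ with its Fourier transform. I would handle this by bounding $\|e^{\bm A_K t}\|\le c\,e^{-\alpha t}$ for some $c,\alpha>0$, which Hurwitz stability guarantees. Apart from that, the argument is routine bookkeeping.
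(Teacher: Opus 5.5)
Your proof is correct and follows the route the paper indicates. The paper does not prove the proposition itself: it cites Skogestad--Postlethwaite and says the $H_2$ norm is converted to the time domain via Parseval's theorem. That is what you do, by identifying $\bm G(j\omega)$ with the Fourier transform of the impulse response $e^{(\bm A+\bm B\bm K)t}\bm D$ and recognizing the resulting integral, after cyclicity of the trace, as $\mathrm{tr}(\bm P_w)$. You also supply the standard justifications that the citation leaves implicit: Hurwitz stability of $\bm A+\bm B\bm K$ gives exponential decay, hence $L^1\cap L^2$, and places the imaginary axis inside the region of convergence.
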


Hence, $H_2$ control can be written as the following optimization problem.
\begin{prob}[$H_2$ control with LMIs]
\label{prob:H2_control}
For the system \eqref{eq:outer_input_sys}, consider a problem of designing a feedback gain that minimizes the $H_2$ norm of the transfer function from the disturbance $\bm{v}$ to the state $\bm{x}$:
    \begin{subequations}\label{eq:H2_gram}
        \begin{align}
            (\bm{P}_w^\ast,\bm{E}^\ast)=\underset{\bm{P}_w,\bm{E}}{\arg\min}~&\mathrm{tr}\left(\bm{P}_w\right),\\
            \mathrm{s.t.}~&\bm{P}_w\in\mathbb{S}_{++}^n,\\
            &\bm{A}\bm{P}_w+\bm{B}\bm{E}+\bm{P}_w\bm{A}^\top\nonumber\\
            &~~~+\bm{E}^\top\bm{B}^\top+\bm{D}\bm{D}^\top=\bm{O}_n.
        \end{align}
    \end{subequations}
    The corresponding feedback gain is recovered as $\bm{K}^\ast=\bm{E}^\ast\bm{P}_w^{{\ast}^{-1}}$.
\end{prob}

Note that also in Problem~\ref{prob:H2_control}, in order to compute $\bm{K}^\ast=\bm{E}^\ast\bm{P}_w^{\ast^{-1}}$ stably in numerical implementation, one may impose the constraint $\bm{P}_w\succeq \epsilon \bm{I}_n$.
This is a numerical regularization and should be distinguished from the theoretical problem formulation.

The correspondence between the $H_2$ control problem in Problem~\ref{prob:H2_control} and the Gramian shaping problem in Problem~\ref{prob:sdp_gramian_shaping} can be verified directly from the closed-form expression \eqref{eq:bw_distance} of the BW distance.
If we set $\bm P_d=\alpha\bm I_n$ and let $\alpha\to 0$, the cross term vanishes and
\begin{align}
\lim_{\alpha\to0} d_{\mathrm{BW}}^2(\bm P,\alpha\bm I_n)=\mathrm{tr}(\bm P)
\end{align}
holds.
In this case, the objective function of Problem~\ref{prob:sdp_gramian_shaping} reduces to $\mathrm{tr}(\bm P)$, and the constraints are identical.
Hence, it coincides with the $H_2$ state-feedback design problem in Problem~\ref{prob:H2_control}.
Therefore, in this limit, BW-based Gramian shaping can be regarded as a generalization that contains $H_2$ control as a special case.
It can be interpreted as a framework that generalizes minimization of the scalar measure $\mathrm{tr}(\bm P_w)$, which measures the magnitude of the covariance (or controllability Gramian), to minimization of the distance between the covariance matrix and a target matrix.
Furthermore, from a probabilistic viewpoint, under Gaussian white disturbance, the stationary distribution of the closed-loop system \eqref{eq:stab_syst} is $\mathcal N(\bm 0,\bm P_w)$,
and the BW distance coincides with the $2$-Wasserstein distance between zero-mean Gaussian distributions.
Thus, $H_2$ control can be reinterpreted as a design that drives the stationary closed-loop distribution $\mathcal N(\bm 0,\bm P_w)$ toward the degenerate Gaussian distribution $\mathcal N(\bm 0,\bm O_n)$ centered at the origin in the sense of optimal transport.
This viewpoint reveals, through Gramian shaping, that classical $H_2$ optimal control implicitly performs an optimal transport of state distributions.

\section{Examples}
This section verifies the proposed method through numerical examples.

\subsection{Gramian shaping for a guidance robot}

A guidance robot \cite{Kayukawa2022,Takagi2025} is required to follow a reference path while reflecting user inputs, as shown in Fig.~\ref{fig:trajectory_tracking_of_AIsuitcase}.
Here, Gramian shaping is used to increase controllability degree along the path and suppress it in the perpendicular direction.
We also examine the design flexibility provided by additional LMI constraints.

The robot is modeled by the following acceleration-input unicycle model:
\begin{align}
    \dot{\bm{x}} =
    \begin{bmatrix}
        c\cos(\theta)\\
        c\sin(\theta)\\
        \omega\\
        0\\
        0
    \end{bmatrix}
    +
    \begin{bmatrix}
        0&0\\
        0&0\\
        0&0\\
        1&0\\
        0&1
    \end{bmatrix}
    (\bm{u}+\bm{v}),
\end{align}
where $\bm{u}\in\mathbb{R}^2$ is the internal input and $\bm{v}\in\mathbb{R}^2$ is the user-applied exogenous input.
As shown in Fig.~\ref{fig:trajectory_tracking_of_AIsuitcase}, forward pulling affects translational motion, whereas lateral pulling induces rotation. Therefore, the internal input and the exogenous input are assumed to enter the system through the same input matrix.

Let $\bm{x}^\ast=\bm{p}(s^\ast)$ be the closest point on the reference path $\bm{p}(s)$, and define the state deviation by
$\delta\bm{x}=\bm{x}-\bm{x}^\ast=\left[\delta p_x~  \delta p_y~\delta\theta~  \delta c~  \delta\omega\right]^\top$.
Next, introduce the path-parallel and path-perpendicular errors $e_{\parallel}$ and $e_{\perp}$ by the coordinate transformation
\begin{align}
\begin{bmatrix}
    e_{\parallel}\\
    e_{\perp}
\end{bmatrix}
=
\bm{R}^\top(\theta^\ast)
\begin{bmatrix}
    \delta p_x\\
    \delta p_y
\end{bmatrix},
\end{align}
and define
    $\delta \bm{\xi}=
    \left[
        e_{\parallel} ~ e_{\perp} ~ \delta\theta ~ \delta c ~ \delta\omega
    \right]^\top$.
Then, linearizing the system around $\bm{x}^\ast$ yields
\begin{align}
    \delta\dot{\bm{\xi}}=\bm{A}\delta\bm{\xi}+\bm{B}\delta\bm{u}+\bm{B}\bm{v},
\end{align}
with
\begin{align}
    \bm{A}=
    \begin{bmatrix}
        0 & \omega^\ast & 0 & 1 & 0\\
        -\omega^\ast & 0 & c^\ast & 0 & 0\\
        0 & 0 & 0 & 0 & 1\\
        0 & 0 & 0 & 0 & 0\\
        0 & 0 & 0 & 0 & 0
    \end{bmatrix},
    \quad
    \bm{B}=
    \begin{bmatrix}
        0 & 0\\
        0 & 0\\
        0 & 0\\
        1 & 0\\
        0 & 1
    \end{bmatrix},
\end{align}
where $\delta\bm{u}=\bm{u}-\bm{u}^\ast$ and $\bm{u}^\ast$ is the nominal input achieving the reference state.

In the simulation, the user pulls the robot forward, and thus $\bm{v}=[1~0]^\top$.
The reference trajectory consists of straight and circular segments.
The desired Gramian is set to
\begin{align}
    \bm{P}_d=\mathrm{diag}(1,0.001,1,0.1,1),
\end{align}
so that controllability is preserved along the path while being suppressed in the perpendicular direction.

We compare the following four cases.
Case 1 has no exogenous input.
Case 2 applies AIR-based Gramian shaping with exogenous input.
Case 3 applies BW-based Gramian shaping with exogenous input \eqref{eq:bw_gramain_gain_problem_all}.
Case 4 adds the constraint
\begin{align}\label{eq:added_lmi_const}
    \bm{P}(2,2)\leq 0.001
\end{align}
to the BW-based Gramian shaping problem \eqref{eq:bw_gramain_gain_problem_all} in order to suppress the path-perpendicular error more strictly.

The resulting Gramians for Cases 2 and 3 are
{\setlength{\arraycolsep}{1pt}
\begin{flalign}
    \bm{P}^\ast_{2}=
    \begin{bmatrix}
        1&0&0&0&0\\
        0&0.0648&0&0&-0.0722\\
        0&0&0.7224&0&0\\
        0&0&0&0.1000&0\\
        0&-0.0722&0&0&1.0778
    \end{bmatrix},
\end{flalign}
}
{\setlength{\arraycolsep}{1pt}
\begin{flalign}
    &\bm{P}^\ast_{3}=
    \begin{bmatrix}
        0.9993&0&0&0&0\\
        0&0.0020&0&0&-0.0401\\
        0&0&0.4008&0&0\\
        0&0&0&0.1001&0\\
        0&-0.0401&0&0&2.0258
    \end{bmatrix},&
\end{flalign}
}
respectively.
In both cases, the perpendicular component exceeds the target value $0.001$, so the desired Gramian is not exactly achieved.
By contrast, directly imposing \eqref{eq:added_lmi_const} in Case 4 yields
{\setlength{\arraycolsep}{1pt}
\begin{flalign}
    \bm{P}^\ast_{4}=
    \begin{bmatrix}
        1&0&0&0&0\\
        0&0.0010&0&0&-0.0216\\
        0&0&0.2161&0&0\\
        0&0&0&0.1000&0\\
        0&-0.0216&0&0&1.2447
    \end{bmatrix}.
\end{flalign}
}
This illustrates an advantage of the BW-based formulation: additional design constraints can be incorporated directly as LMIs in the SDP.

The control period is $0.2$~seconds, and the input is applied with zero-order hold.
Each simulation ends when the robot reaches a ball of radius $0.1$ centered at $({p}_{x,\mathrm{goal}},{p}_{y,\mathrm{goal}})=(-2,2)$.
We also evaluate the average computation time of Gramian shaping at each step.
The simulations are carried out in MATLAB 2024b.
For AIR-based Gramian shaping, \texttt{fmincon} is used, whereas BW-based Gramian shaping is solved by YALMIP.
The computer environment is an Intel Core i7-12700H CPU, Windows 11 Pro, and 32 GB memory.

Table~\ref{tab:goal_times} lists the goal-reaching times and average computation times.
Cases 2--4 all reach the goal faster than Case 1, showing that the forward user input is properly reflected.
Moreover, Cases 3 and 4 are much faster to compute than Case 2, confirming the computational advantage of BW-based Gramian shaping.

Fig.~\ref{fig:path_following} shows the trajectories and the path-perpendicular error $\|e_{\perp}\|$.
In Cases 2 and 3, the perpendicular controllability is not sufficiently suppressed, so the exogenous input also affects the lateral direction and causes deviation from the path.
In Case 4, this deviation is reduced without significantly degrading the goal-reaching time.
This is because the LMI constraint directly limits the influence in the perpendicular direction.

\begin{figure}[t]
  \begin{center}
    \includegraphics[width=60mm]{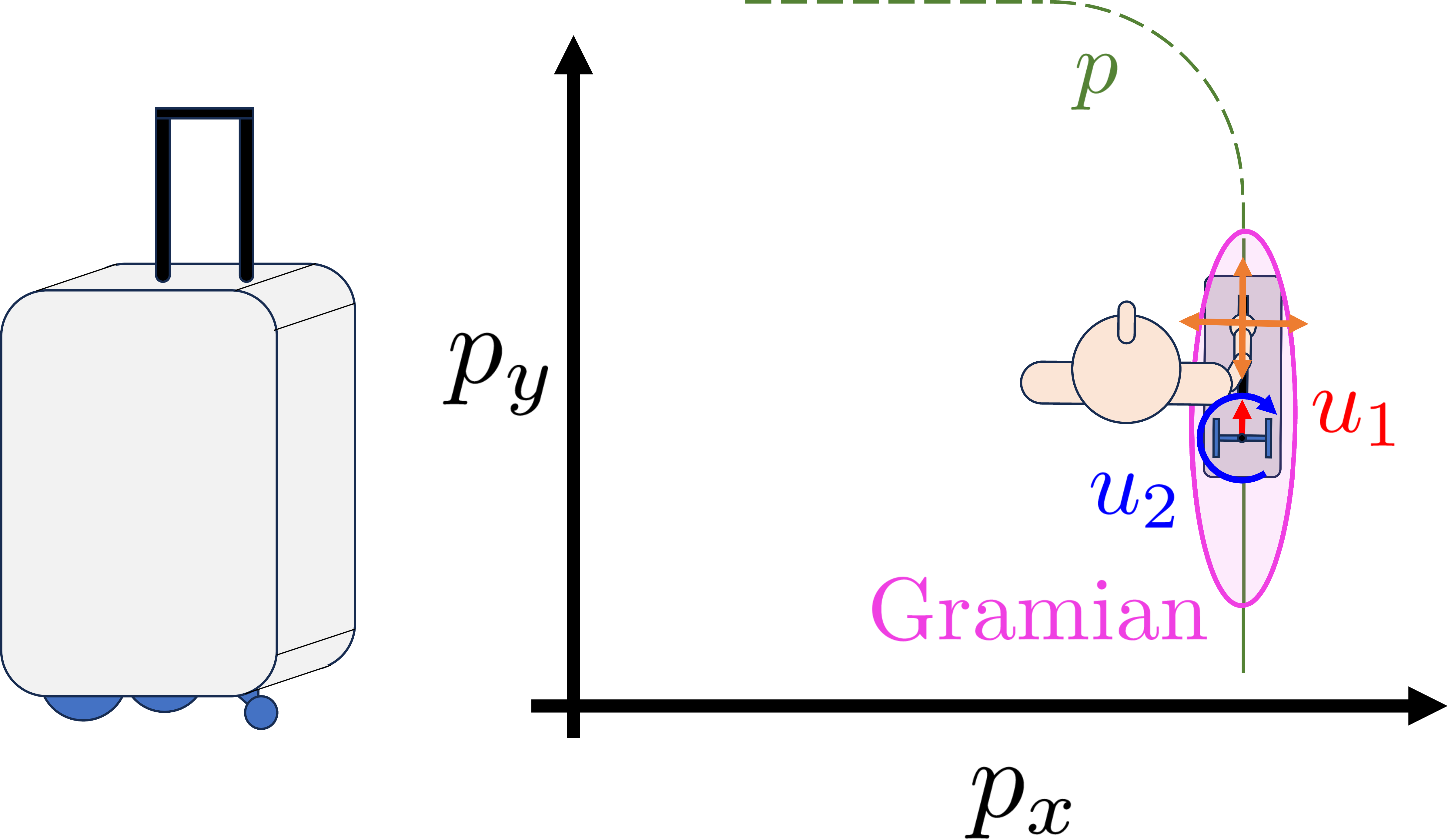}
    \caption{
    Visualization of a guidance robot pulling a user while following a track. 
    The user can give the robot some instructions with the handle. 
    }
    \label{fig:trajectory_tracking_of_AIsuitcase}
  \end{center}
\end{figure}

\begin{figure}[t]
    \centering
    \includegraphics[width=80mm]{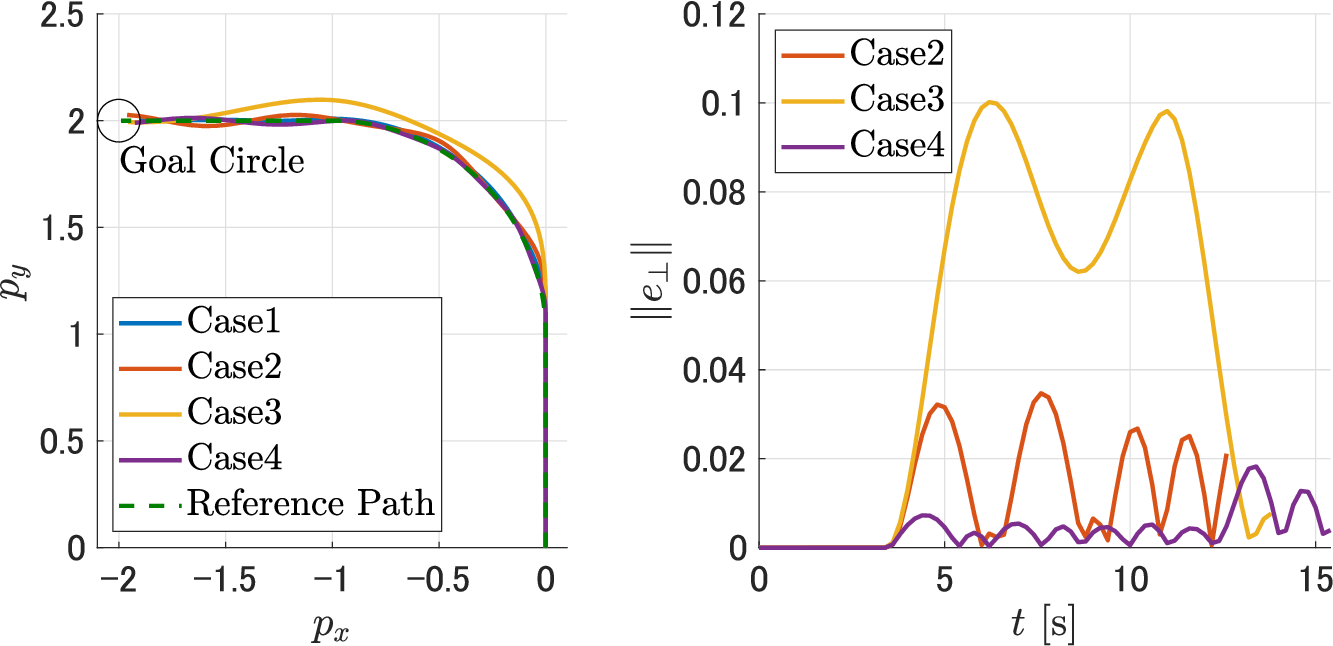}
    \caption{Path-following trajectories in Cases 1--4 (Left) and path-perpendicular errors in Cases 2--4 (Right).
    In Cases 2 and 3, the exogenous input is reflected but causes deviation from the reference path.
    In Case 4, the additional LMI constraint suppresses this deviation.}
    \label{fig:path_following}
\end{figure}

\begin{table}[t]
  \centering  
  \caption{Goal-reaching times and average computation times \\for Cases 1--4}
  \label{tab:goal_times}
  \begin{tabular}{|l|c|c|c|c|}\hline
    &Case1 & Case2 & Case3 & Case4 \\\hline
    Goal Time[s]&35.8 & 12.6 & 13.8 & 15.4 \\\hline
    Average computation time[s]&$-$&0.328 &0.100&0.086\\\hline
  \end{tabular}
\end{table}

\subsection{Numerical comparison between $H_2$ control and Gramian shaping}

Section~\ref{sec:H2_control} showed that BW-based Gramian shaping with $\bm{P}_d=\alpha \bm{I}_n$ converges to $H_2$ control as $\alpha\to 0$.
This section verifies the relationship by a simple two-dimensional example.

Consider the system \eqref{eq:outer_input_sys} with
\begin{align}
    \bm{A}=\begin{bmatrix}
        0&1\\
        -1&-1
    \end{bmatrix},\quad
    \bm{B}=\begin{bmatrix}
        0\\
        1
    \end{bmatrix},\quad
    \bm{D}=\begin{bmatrix}
        1\\
        0
    \end{bmatrix}.
\end{align}
The exogenous input $\bm{v}$ is white noise. 

We compare the stationary covariances, equivalently the controllability Gramian, obtained by $H_2$ control with that obtained by BW-based Gramian shaping.
In BW-based Gramian shaping, the desired Gramian is set to $\bm{P}_d=\alpha \bm{I}_n$, and the resulting closed-loop covariance is compared with that of $H_2$ control for different values of $\alpha$.
To avoid excessively high gains, the positive definite constraint $\bm{P}\succeq 10^{-2}\bm{I}_n$ is imposed.

The results are shown in Figs.~\ref{fig:covariance_spl} and \ref{fig:compare_gram_H2}.
Fig.~\ref{fig:covariance_spl} plots 1000 state samples at $t=100$ from the initial condition $\bm{x}(0)=\bm{0}$ for each $\alpha$ and for $H_2$ control.
The trajectories are simulated by the Euler--Maruyama method.
For large $\alpha$, the sample distribution spreads according to the target Gramian $\bm{P}_d=\alpha \bm{I}_n$.
As $\alpha$ decreases, it approaches the distribution obtained by $H_2$ control.

Fig.~\ref{fig:compare_gram_H2} shows the Frobenius norm of the difference between the stationary covariance for BW-based Gramian shaping and that for $H_2$ control, namely,
\begin{align}
    \|\bm{P}^\ast-\bm{P}_{H_2}\|_F.
\end{align}
This confirms that the BW-based solution continuously approaches the $H_2$ solution as $\alpha$ decreases, and coincides with it at $\alpha=0$.

These results numerically support that BW-based Gramian shaping reduces to $H_2$ control in the limit $\bm{P}_d\to\bm{O}_n$.
Hence, when BW-based Gramian shaping is interpreted as distribution control for linear stochastic systems, $H_2$ control can be viewed as a transport problem toward a degenerate Gaussian distribution with zero covariance.

\begin{figure}[t]
    \centering
    \includegraphics[width=80mm]{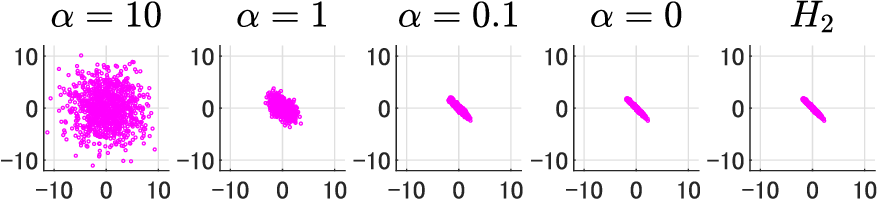}
    \caption{State sample distributions under BW-based Gramian shaping for different $\alpha$ and under $H_2$ control.
    As $\alpha$ decreases, the distribution approaches that of $H_2$ control.}
    \label{fig:covariance_spl}
\end{figure}

\begin{figure}[t]
    \centering
    \includegraphics[width=50mm]{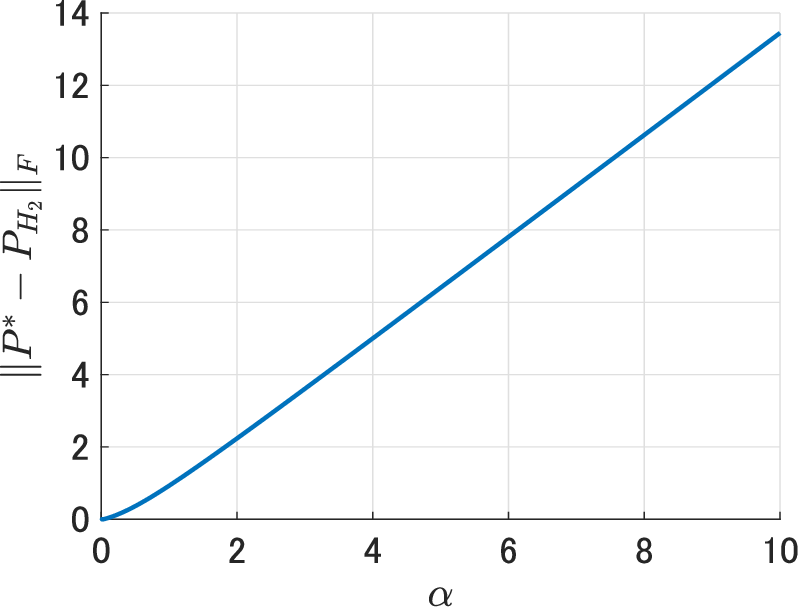}
    \caption{Difference $\|\bm{P}^\ast-\bm{P}_{H_2}\|_F$ between the stationary covariance obtained by BW-based Gramian shaping and that obtained by $H_2$ control.
    The step size of $\alpha$ is 0.01.
    The difference decreases as $\alpha$ becomes smaller.}
    \label{fig:compare_gram_H2}
\end{figure}

\section{CONCLUSIONS}

This paper proposed a BW-based method for shaping the controllability Gramian from exogenous inputs to the state into a desired form.
We showed that the objective function is strictly convex and formulated the problem as an SDP using an LMI representation of the BW distance. This also enables the incorporation of additional LMI constraints.
The proposed framework also clarified that $H_2$ control is a special case of BW-based Gramian shaping under Gaussian white noise.
Numerical examples demonstrated anisotropic controllability design for a guidance robot, verified the ability to impose additional LMI constraints, and confirmed through a simple example that the proposed method approaches $H_2$ control in the corresponding limit.
These results show the effectiveness of the proposed method for externally operated systems.

\newpage
\appendix
This appendix contains the lemmas used in this paper.

\begin{lem}[Th.~4~\cite{Hotz1985}]
\label{lem:gramian_theorem}
    For an ($\bm{A},\bm{B}$)-controllable and ($\bm{A},\bm{D}$)-controllable linear system \eqref{eq:outer_input_sys}, there exists a controllability Gramian $\bm{P}$ if and only if
    \ifdefined\iflatexml
    \begin{align}\label{eq:condition_of_gain_gram}
        &\left(\bm{I}_n-\bm{BB}^\dagger \right)\left(\bm{A} \bm{P} + \bm{P}\bm{A}^\top + \bm{DD}^\top\right)\left(\bm{I}_n-\bm{BB}^\dagger \right) = \bm{O}_n.
    \end{align}
    \else
    \begin{align}\label{eq:condition_of_gain_gram}
        &\left(\bm{I}_n-\bm{BB}^\dagger \right)\left(\bm{A} \bm{P} + \bm{P}\bm{A}^\top \right.\nonumber\\
        &~~~~~~~~~~~~~~~~~~~\left.+ \bm{DD}^\top\right)\left(\bm{I}_n-\bm{BB}^\dagger \right) = \bm{O}_n.
    \end{align}
    \fi
\end{lem}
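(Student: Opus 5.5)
The plan is to read the lemma as: a given $\bm{P}\in\mathbb{S}_{++}^n$ is the controllability Gramian \eqref{eq:unsta_Gram} of the closed loop \eqref{eq:stab_syst} for some stabilizing $\bm{K}$ if and only if \eqref{eq:condition_of_gain_gram} holds. Write $\bm{\Pi}=\bm{I}_n-\bm{B}\bm{B}^\dagger$. This is the orthogonal projector onto $\mathrm{range}(\bm{B})^\perp$, so it satisfies $\bm{\Pi}=\bm{\Pi}^\top=\bm{\Pi}^2$ and $\bm{\Pi}\bm{B}=\bm{O}$. Set $\bm{Q}=\bm{A}\bm{P}+\bm{P}\bm{A}^\top+\bm{D}\bm{D}^\top$. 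Then the Lyapunov equation \eqref{eq:extended_lyapnov_equation} can be written as $\bm{B}\bm{E}+\bm{E}^\top\bm{B}^\top=-\bm{Q}$ with $\bm{E}=\bm{K}\bm{P}$. The proof reduces to a solvability statement for this linear matrix equation in $\bm{E}$.

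\emph{Necessity.} If $\bm{P}$ is the Gramian for some $\bm{K}$, then the corollary above gives \eqref{eq:extended_lyapnov_equation}. Multiplying it by $\bm{\Pi}$ on the left and on the right kills the terms $\bm{\Pi}\bm{B}\bm{K}\bm{P}\bm{\Pi}$ and $\bm{\Pi}\bm{P}\bm{K}^\top\bm{B}^\top\bm{\Pi}$. What remains is exactly \eqref{eq:condition_of_gain_gram}.

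\emph{Sufficiency.} Assume $\bm{\Pi}\bm{Q}\bm{\Pi}=\bm{O}_n$. Decompose $\bm{Q}=\bm{B}\bm{B}^\dagger\bm{Q}+\bm{\Pi}\bm{Q}\bm{B}\bm{B}^\dagger+\bm{\Pi}\bm{Q}\bm{\Pi}$; the last term vanishes. I will choose
\[
\bm{E}=-\bm{B}^\dagger\bm{Q}+\tfrac12\bm{B}^\dagger\bm{Q}\bm{B}\bm{B}^\dagger .
\]
Using $\bm{Q}=\bm{Q}^\top$ and $\bm{B}\bm{B}^\dagger$ symmetric, a direct expansion should give
\[
\bm{B}\bm{E}+\bm{E}^\top\bm{B}^\top=-\bm{B}\bm{B}^\dagger\bm{Q}-\bm{Q}\bm{B}\bm{B}^\dagger+\bm{B}\bm{B}^\dagger\bm{Q}\bm{B}\bm{B}^\dagger=-\bm{Q}+\bm{\Pi}\bm{Q}\bm{\Pi}=-\bm{Q}.
\]
Setting $\bm{K}=\bm{E}\bm{P}^{-1}$ then gives a gain satisfying \eqref{eq:extended_lyapnov_equation}. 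This is precisely the gain formula that Lemma~\ref{lem:gain_derive_for_gramian} is expected to record. Any skew-symmetric-type freedom $\bm{E}+\bm{B}^\dagger\bm{S}$ with $\bm{S}$ skew and $\bm{B}\bm{B}^\dagger\bm{S}\bm{B}\bm{B}^\dagger=\bm{S}$ could also be added, but this is not needed.

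The main obstacle is showing that this $\bm{K}$ actually makes $\bm{A}+\bm{B}\bm{K}$ Hurwitz, and hence that $\bm{P}$ is the Gramian \eqref{eq:unsta_Gram} rather than merely a solution of the algebraic equation. The argument runs as follows.
\begin{enumerate}
\item Suppose $\bm{w}^H(\bm{A}+\bm{B}\bm{K})=\lambda\bm{w}^H$ with $\bm{w}\neq 0$.
\item Apply $\bm{w}^H(\cdot)\bm{w}$ to \eqref{eq:extended_lyapnov_equation}. This gives $2\,\mathrm{Re}(\lambda)\,\bm{w}^H\bm{P}\bm{w}=-\|\bm{D}^\top\bm{w}\|^2$.
\item Since $\bm{P}\succ0$, this forces $\mathrm{Re}(\lambda)\le0$.
\item Equality would require $\bm{D}^\top\bm{w}=0$, i.e.\ a left eigenvector of $\bm{A}+\bm{B}\bm{K}$ orthogonal to $\mathrm{range}(\bm{D})$. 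This contradicts the controllability of $(\bm{A}+\bm{B}\bm{K},\bm{D})$ by the PBH test.
\end{enumerate}
To use step 4, controllability of $(\bm{A}+\bm{B}\bm{K},\bm{D})$ must hold. I would argue this is the role of the $(\bm{A},\bm{D})$-controllability assumption, possibly together with $\mathrm{range}(\bm{D})\subseteq\mathrm{range}(\bm{B})$ as in the example, where $\bm{D}=\bm{B}$. Alternatively, I would restrict to the stabilizing gain by exploiting the freedom in $\bm{E}$, citing \cite{Hotz1985}.

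Once $\bm{A}+\bm{B}\bm{K}$ is Hurwitz, the Lyapunov equation has the unique solution \eqref{eq:unsta_Gram}, so $\bm{P}$ equals that Gramian. This last step is where I expect to rely most on the hypotheses and on the cited reference.
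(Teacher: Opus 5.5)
Your necessity argument and the algebraic half of sufficiency are correct. The identity $\bm{B}\bm{E}+\bm{E}^\top\bm{B}^\top=-\bm{Q}+\bm{\Pi}\bm{Q}\bm{\Pi}$ checks out, so under the projected condition every such $\bm{P}\succ0$ solves the closed-loop Lyapunov equation with $\bm{K}=\bm{E}\bm{P}^{-1}$. The paper gives no argument of its own here; it only cites Hotz--Skelton. The gap is the stability step, where you located it, and it cannot be closed the way you propose.

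Controllability of $(\bm{A},\bm{D})$ is not inherited by $(\bm{A}+\bm{B}\bm{K},\bm{D})$. It is inherited when $\mathrm{range}(\bm{B})\subseteq\mathrm{range}(\bm{D})$, because then $\bm{B}\bm{K}=\bm{D}(\bm{D}^\dagger\bm{B}\bm{K})$ is feedback through $\bm{D}$. Your inclusion $\mathrm{range}(\bm{D})\subseteq\mathrm{range}(\bm{B})$ is the wrong direction. Take $\bm{A}=\begin{bmatrix}0&1\\1&0\end{bmatrix}$, $\bm{B}=\bm{I}_2$, $\bm{D}=[1~0]^\top$ and $\bm{P}=\bm{I}_2$. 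Every hypothesis you list holds, yet your $\bm{E}=-\frac12\bm{Q}$ gives $\bm{A}+\bm{B}\bm{K}=\mathrm{diag}(-\frac12,0)$, which is not Hurwitz. Here only a nonzero use of the free skew term repairs it: adding $s\begin{bmatrix}0&1\\-1&0\end{bmatrix}$ with $s\neq0$ to $\bm{E}$ gives a stabilizing gain.

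More seriously, that freedom is not always available. For $m=1$ the equation $\bm{B}\bm{E}_0+\bm{E}_0^\top\bm{B}^\top=\bm{O}$ forces $\bm{E}_0=\bm{O}$, so the gain is unique, and the ``if'' direction can then fail outright. Take $\bm{A}=\begin{bmatrix}-1&1\\1&0\end{bmatrix}$, $\bm{B}=[0~1]^\top$, $\bm{D}=[1~0]^\top$ (both pairs controllable) and $\bm{P}=\begin{bmatrix}3/2&1\\1&1\end{bmatrix}\succ0$. Then $(\bm{A}\bm{P}+\bm{P}\bm{A}^\top+\bm{D}\bm{D}^\top)_{11}=0$, so the projected condition holds. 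However, the only gain solving the Lyapunov equation is $\bm{K}=[-1~0]$. It gives $\bm{A}+\bm{B}\bm{K}=\begin{bmatrix}-1&1\\0&0\end{bmatrix}$, which has an eigenvalue at $0$, and the Gramian integral for this gain equals $\frac12\bm{D}\bm{D}^\top\neq\bm{P}$.

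So under the stability-inclusive reading you adopted, the statement needs an extra hypothesis. That reading is the one the paper needs, given Definition~\ref{def:unsta_Gram} and the ``stabilizing'' hypothesis of Lemma~\ref{lem:gain_derive_for_gramian}. Two hypotheses that work are $\bm{D}\bm{D}^\top\succ0$ and $\mathrm{range}(\bm{B})\subseteq\mathrm{range}(\bm{D})$; the latter covers the guidance-robot example, where $\bm{D}=\bm{B}$. Under either one, your eigenvector argument goes through for every solution $\bm{K}$. If the lemma is instead read purely algebraically, as existence of some $\bm{K}$ satisfying the Lyapunov equation, your first two steps already form a complete proof.
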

\begin{lem}[Th.~5~\cite{Hotz1985}]
\label{lem:gain_derive_for_gramian}
For the controllable linear system \eqref{eq:outer_input_sys}, assume that a controllability Gramian $\bm{P}$ is realizable by a stabilizing linear feedback law.
Then all gain matrices $\bm{K}$ realizing $\bm{P}$ are given by
\begin{align} \bm{K} = -\frac{1}{2}\bm{B}^\dagger \left(\bm{A} \bm{P} + \bm{P} \bm{A}^\top +{\bm{D}\bm{D}^\top}  - \bm{S}_d \right) \bm{P}^{-1}, \label{eq:specific_gain} \end{align}
where $\bm{S}_d$ is an $n \times n$ skew-symmetric matrix of the form
\begin{align} 
\bm{S}_d = \bm{N}\left[\begin{array}{cc} \bm{O}_{n-m} & \bm{S}_{d_{12}} \\ -\bm{S}_{d_{12}}^\top  & \bm{S}_{d_{22}} \end{array}\right]\bm{N}^\top.
\end{align}
$\bm{N}\in\mathcal{O}(n)$ satisfies 
\begin{align}
    \bm{N}\left(\bm{I}-\bm{B}\bm{B}^\dagger\right)\bm{N}^\top=\mathrm{blkdiag}\left(\bm{I}_{n-m}, \bm{O}_m\right).
\end{align}
$\bm{S}_{d12}$ is given by
\ifdefined\iflatexml
\begin{align}
    \bm{S}_{d12}=&\left[\bm{I}_{n-m}~\bm{O}_{n-m\times m}\right]\bm{N}^\top\bigl(\bm{A}\bm{P}+\bm{P}\bm{A}^\top+{\bm{D}\bm{D}^\top}\bigr)\bm{N}\left[\bm{O}_{n-m\times m}~\bm{I}_{n-m}\right]^\top
\end{align}
\else
\begin{align}
    \bm{S}_{d12}=&\left[\bm{I}_{n-m}~\bm{O}_{n-m\times m}\right]\bm{N}^\top\bigl(\bm{A}\bm{P}+\bm{P}\bm{A}^\top\nonumber\\
    &+{\bm{D}\bm{D}^\top}\bigr)\bm{N}\left[\bm{O}_{n-m\times m}~\bm{I}_{n-m}\right]^\top
\end{align}
\fi
and $\bm{S}_{d22}$ is an arbitrary $m\times m$ skew-symmetric matrix.
\end{lem}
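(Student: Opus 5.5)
The plan is to reduce the Lyapunov equation \eqref{eq:extended_lyapnov_equation} to a linear equation in the single unknown $\bm{M}:=\bm{B}\bm{K}\bm{P}$, describe all its solutions through a skew-symmetric parameter, and then impose that $\bm{M}$ lies in the range of $\bm{B}$. Set $\bm{Q}:=\bm{A}\bm{P}+\bm{P}\bm{A}^\top+\bm{D}\bm{D}^\top$, which is symmetric. Then \eqref{eq:extended_lyapnov_equation} reads $\bm{M}+\bm{M}^\top=-\bm{Q}$. Every solution of this equation has the form $\bm{M}=-\tfrac12(\bm{Q}-\bm{S}_d)$ with $\bm{S}_d$ skew-symmetric. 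Indeed, the symmetric part of $\bm{M}$ is forced to be $-\tfrac12\bm{Q}$, and its skew part is free.

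Next I impose the range condition. A matrix $\bm{M}$ can be written as $\bm{B}\bm{K}\bm{P}$ for some $\bm{K}$ if and only if $(\bm{I}_n-\bm{B}\bm{B}^\dagger)\bm{M}=\bm{O}_n$, because $\bm{P}$ is invertible. In that case $\bm{K}=\bm{B}^\dagger\bm{M}\bm{P}^{-1}$, which is unique provided $\bm{B}$ has full column rank so that $\bm{B}^\dagger\bm{B}=\bm{I}_m$. I will take this rank condition as an implicit standing assumption; without it the formula only parametrizes $\bm{B}\bm{K}$. Substituting $\bm{M}$ gives \eqref{eq:specific_gain}.

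To make the range condition explicit, I conjugate by $\bm{N}$ so that the projector becomes $\mathrm{blkdiag}(\bm{I}_{n-m},\bm{O}_m)$. I partition $\tilde{\bm{Q}}=\bm{N}^\top\bm{Q}\bm{N}$ and $\tilde{\bm{S}}=\bm{N}^\top\bm{S}_d\bm{N}$ conformally, fixing the orientation convention for $\bm{N}$ consistently with the statement. The condition says that the first block row of $\tilde{\bm{Q}}-\tilde{\bm{S}}$ vanishes. The $(1,1)$ block gives $\tilde{\bm{Q}}_{11}=\tilde{\bm{S}}_{11}$. Since the left side is symmetric and the right side is skew, both must vanish. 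The statement $\tilde{\bm{Q}}_{11}=\bm{O}$ is exactly the realizability condition \eqref{eq:condition_of_gain_gram} of Lemma~\ref{lem:gramian_theorem}, which holds by hypothesis, and $\tilde{\bm{S}}_{11}=\bm{O}_{n-m}$ is the zero block in $\bm{S}_d$. The $(1,2)$ block forces $\tilde{\bm{S}}_{12}=\tilde{\bm{Q}}_{12}$, which is the stated formula for $\bm{S}_{d12}$. Skew-symmetry then fixes $\tilde{\bm{S}}_{21}=-\bm{S}_{d12}^\top$, while the second block row is unconstrained, so $\bm{S}_{d22}$ is an arbitrary skew-symmetric matrix. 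Reading the argument backwards shows the converse: every such $\bm{S}_d$ yields a $\bm{K}$ satisfying \eqref{eq:extended_lyapnov_equation}. This establishes that the formula gives all solutions, and only solutions.

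The main obstacle I expect is showing that every gain in the family is stabilizing, so that $\bm{P}$ really is its Gramian in the sense of Definition~\ref{def:unsta_Gram}, rather than merely a solution of the Lyapunov equation. I would first try the standard eigenvector argument. Let $\bm{w}$ be an eigenvector of $(\bm{A}+\bm{B}\bm{K})^\top$ with eigenvalue $\lambda$. The Lyapunov equation gives $2\,\mathrm{Re}\,\lambda\,\bm{w}^H\bm{P}\bm{w}=-\|\bm{D}^\top\bm{w}\|^2$. Because $\bm{P}\succ0$, this yields $\mathrm{Re}\,\lambda\le0$, and if $\mathrm{Re}\,\lambda=0$ then $\bm{D}^\top\bm{w}=\bm{0}$. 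Excluding the latter case requires controllability of $(\bm{A}+\bm{B}\bm{K},\bm{D})$, which does not follow automatically from controllability of $(\bm{A},\bm{D})$. I would try to derive it from the assumed controllability hypotheses together with the realizability assumption. If that fails, I would state it as the precise interpretation under which Hotz's theorem applies.
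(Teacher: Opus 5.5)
The paper gives no proof of this lemma; it only cites Hotz--Skelton. Your argument is the standard covariance-control derivation: the symmetric part of $\bm B\bm K\bm P$ is forced to be $-\frac12\bm Q$, the skew part is free, and the range condition $(\bm I_n-\bm B\bm B^\dagger)(\bm Q-\bm S_d)=\bm O_n$ is read off block-wise after an orthogonal change of basis. It is correct. It shows that every gain realizing $\bm P$ has the displayed form, and that every member of the family solves \eqref{eq:extended_lyapnov_equation}; that is the content of the lemma.

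Three small points:
\begin{itemize}
\item Full column rank is not an extra assumption. The block sizes in $\mathrm{blkdiag}(\bm I_{n-m},\bm O_m)$ already force $\mathrm{rank}\,\bm B=m$, hence $\bm B^\dagger\bm B=\bm I_m$.
\item With $\tilde{\bm Q}=\bm N^\top\bm Q\bm N$ and $\bm S_d=\bm N\tilde{\bm S}\bm N^\top$, the condition you actually need is $\bm N^\top(\bm I_n-\bm B\bm B^\dagger)\bm N=\mathrm{blkdiag}(\bm I_{n-m},\bm O_m)$. This is the transpose of the displayed one, so say so explicitly rather than ``fixing the orientation''. Also, the right selector in $\bm S_{d12}$ should be $[\bm O_{m\times(n-m)}~\bm I_m]^\top$.
\item You only use the ``only if'' half of Lemma~\ref{lem:gramian_theorem}. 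It follows in one line from $(\bm I_n-\bm B\bm B^\dagger)\bm B=\bm O$.
\end{itemize}

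In your last paragraph, do not try to derive controllability of $(\bm A+\bm B\bm K,\bm D)$ from the hypotheses. It fails in general, so the converse reading (``every member of the family realizes $\bm P$'') is false. Take $n=m=2$, $\bm B=\bm I_2$, $\bm A=\begin{bmatrix}0&0\\1&0\end{bmatrix}$, $\bm D=[1~0]^\top$, $\bm P=\mathrm{diag}(\frac12,1)$. Both pairs are controllable, and $\bm S_d$ is an arbitrary skew matrix. With $\bm S_d=\begin{bmatrix}0&\frac12+t\\-\frac12-t&0\end{bmatrix}$ the formula gives $\bm A+\bm B\bm K=\begin{bmatrix}-1&t/2\\-t&0\end{bmatrix}$. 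This is Hurwitz for $t\neq0$, so $\bm P$ is realizable by a stabilizing law. For $t=0$ it is $\mathrm{diag}(-1,0)$, whose Gramian is $\mathrm{diag}(\frac12,0)\neq\bm P$. Exactly the obstruction you isolated occurs, with $\bm w=[0~1]^\top$ and $\bm D^\top\bm w=\bm 0$.

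So your fallback is the right formulation. The family is exactly the solution set of the Lyapunov equation, and its Hurwitz members are exactly the gains realizing $\bm P$. By your eigenvector argument, a member is Hurwitz whenever $(\bm A+\bm B\bm K,\bm D)$ is controllable.

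The derivation you hoped for does go through when $\mathrm{range}\,\bm B\subseteq\mathrm{range}\,\bm D$, e.g.\ $\bm D=\bm B$ as in the guidance-robot example. An eigenvector $\bm w$ of $(\bm A+\bm B\bm K)^\top$ with $\bm D^\top\bm w=\bm 0$ then also has $\bm B^\top\bm w=\bm 0$. Hence it is an eigenvector of $\bm A^\top$ annihilated by $\bm D^\top$, contradicting controllability of $(\bm A,\bm D)$.
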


\begin{lem}[Th.~7~\cite{BHATIA2019165}]\label{lem:concave}
    $\mathrm{tr}(\bm{X}^{\frac{1}{2}}):\mathbb{S}_+^n\to\mathbb{R}_+$ is a strictly concave function on $\mathbb{S}_{++}^n$. That is,  $\forall\bm{X},\bm{Y}\in\mathbb{S}_{++}^n$, $\bm{X}\neq\bm{Y}$, $\forall t\in(0,1)$, 
    \begin{align}
        &\mathrm{tr}\left(\left(t\bm{X}+(1-t)\bm{Y}\right)^\frac{1}{2} \right) \nonumber\\
        &- \left(t\mathrm{tr}\left(\bm{X}^{\frac{1}{2}}\right)+(1-t)\mathrm{tr}\left(\bm{Y}^{\frac{1}{2}}\right)\right)> 0.
    \end{align}
\end{lem}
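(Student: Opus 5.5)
The plan is to derive the lemma from a general fact: if $g:(0,\infty)\to\mathbb{R}$ is strictly concave, then $\bm{X}\mapsto\mathrm{tr}\,g(\bm{X})$ is strictly concave on $\mathbb{S}_{++}^n$. We then apply it with $g(s)=s^{1/2}$. The argument is a Peierls-type variational argument in an eigenbasis of the convex combination. Fix $\bm{X}\neq\bm{Y}$ in $\mathbb{S}_{++}^n$ and $t\in(0,1)$, and set $\bm{Z}=t\bm{X}+(1-t)\bm{Y}\in\mathbb{S}_{++}^n$. Take an orthonormal eigenbasis $\bm{w}_1,\dots,\bm{w}_n$ of $\bm{Z}$. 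Then
\begin{align}
\mathrm{tr}\left(\bm{Z}^{\frac{1}{2}}\right)=\sum_{i=1}^n g\left(\bm{w}_i^\top\bm{Z}\bm{w}_i\right)=\sum_{i=1}^n g\left(t\,\bm{w}_i^\top\bm{X}\bm{w}_i+(1-t)\,\bm{w}_i^\top\bm{Y}\bm{w}_i\right).
\end{align}

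\emph{Step 1 (scalar concavity).} Concavity of $g$ bounds each summand below by $t\,g(\bm{w}_i^\top\bm{X}\bm{w}_i)+(1-t)\,g(\bm{w}_i^\top\bm{Y}\bm{w}_i)$.

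\emph{Step 2 (Jensen in a fixed unit vector).} For a unit vector $\bm{w}$, write $\bm{X}=\sum_j\mu_j\bm{q}_j\bm{q}_j^\top$. The weights $p_j=(\bm{q}_j^\top\bm{w})^2$ sum to one, so $\bm{w}^\top\bm{X}\bm{w}=\sum_j p_j\mu_j$ and $\bm{w}^\top g(\bm{X})\bm{w}=\sum_j p_j g(\mu_j)$. Jensen's inequality gives $g(\bm{w}^\top\bm{X}\bm{w})\ge\bm{w}^\top g(\bm{X})\bm{w}$, and likewise for $\bm{Y}$. Summing over $i$ and using that $\sum_i\bm{w}_i^\top g(\bm{X})\bm{w}_i=\mathrm{tr}\,g(\bm{X})$ for any orthonormal basis yields
\begin{align}
\mathrm{tr}\left(\bm{Z}^{\frac{1}{2}}\right)\ge t\,\mathrm{tr}\left(\bm{X}^{\frac{1}{2}}\right)+(1-t)\,\mathrm{tr}\left(\bm{Y}^{\frac{1}{2}}\right).
\end{align}
This is the (non-strict) concavity.

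\emph{Step 3 (strictness).} This is the main obstacle, and I would handle it by tracking the equality cases. Suppose equality holds. Then equality must hold in every Step~1 inequality. Since $s^{1/2}$ is strictly concave and $t\in(0,1)$, this forces $\bm{w}_i^\top\bm{X}\bm{w}_i=\bm{w}_i^\top\bm{Y}\bm{w}_i$ for all $i$. Equality must also hold in every Jensen step. By strict concavity, the distribution $\{p_j\}$ attached to $\bm{w}_i$ must then be concentrated on a single eigenvalue of $\bm{X}$, i.e. $\bm{w}_i$ lies in one eigenspace of $\bm{X}$. The same holds for $\bm{Y}$.

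Hence $\bm{X}$ and $\bm{Y}$ are both diagonal in the basis $\{\bm{w}_i\}$. Their diagonal entries are $\bm{w}_i^\top\bm{X}\bm{w}_i=\bm{w}_i^\top\bm{Y}\bm{w}_i$, so they coincide and $\bm{X}=\bm{Y}$, a contradiction. Therefore the inequality is strict whenever $\bm{X}\neq\bm{Y}$, which is exactly the claim of Lemma~\ref{lem:concave}.

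An alternative route to the non-strict part is the L\"owner--Heinz theorem, which gives operator concavity of $\bm{X}\mapsto\bm{X}^{1/2}$, followed by taking the trace. However, extracting strictness from it seems harder, so the eigenbasis argument above is the one I would write out.
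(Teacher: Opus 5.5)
Your proof is correct. It differs from the paper mainly in that the paper gives no argument at all: Lemma~\ref{lem:concave} is imported as a black box from Th.~7 of Bhatia, Jain and Lim.

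Your route has four steps. You diagonalize $\bm{Z}=t\bm{X}+(1-t)\bm{Y}$ and apply scalar concavity to the diagonal entries $\bm{w}_i^\top\bm{X}\bm{w}_i$ and $\bm{w}_i^\top\bm{Y}\bm{w}_i$. You then use Jensen to pass to $\bm{w}_i^\top\bm{X}^{1/2}\bm{w}_i$, and you track both equality cases. This needs only scalar Jensen, and it proves the stronger fact that $\bm{X}\mapsto\mathrm{tr}\,g(\bm{X})$ is strictly concave for every strictly concave $g$. Your equality analysis is complete: every $\bm{w}_i$ becomes a common eigenvector of $\bm{X}$ and $\bm{Y}$ with equal Rayleigh quotients.

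Your closing remark is too pessimistic, though. For the square root, strictness via operator concavity is immediate. Start from the identity $t\bm{X}+(1-t)\bm{Y}=\bigl(t\bm{X}^{1/2}+(1-t)\bm{Y}^{1/2}\bigr)^2+t(1-t)\bigl(\bm{X}^{1/2}-\bm{Y}^{1/2}\bigr)^2$. Together with operator monotonicity of $s\mapsto s^{1/2}$, it gives $(t\bm{X}+(1-t)\bm{Y})^{1/2}\succeq t\bm{X}^{1/2}+(1-t)\bm{Y}^{1/2}$. If the traces coincide, this positive semidefinite difference is zero. Squaring then forces $(\bm{X}^{1/2}-\bm{Y}^{1/2})^2=\bm{O}_n$, hence $\bm{X}^{1/2}=\bm{Y}^{1/2}$ and $\bm{X}=\bm{Y}$.

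That argument is shorter, but it is tied to $s^{1/2}$ and relies on operator monotonicity. Yours is longer, but fully elementary and general.
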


\begin{lem}[Ch.~3~\cite{Lipeng2013}]\label{lem:bures_min}
    The BW distance satisfies
    \begin{subequations}
    \begin{align}
        d_{BW}^2(\bm{X},\bm{Y})=\min_{U}&~\mathrm{tr}\left(\bm{X}+\bm{Y}-2\bm{U}\right),\\
        \mathrm{s.t.}&~\begin{bmatrix}
        \bm{X}&\bm{U}\\
        \bm{U}^\top&\bm{Y}
        \end{bmatrix}\succeq 0.
        \end{align}
    \end{subequations}
\end{lem}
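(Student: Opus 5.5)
The plan is to reduce the minimization to a maximization of $\mathrm{tr}(\bm{U})$ and to identify the optimal value with the cross term of \eqref{eq:bw_distance}. Since $\mathrm{tr}(\bm{X}+\bm{Y})$ does not depend on $\bm{U}$, it suffices to show
\begin{align}
\max\left\{\mathrm{tr}(\bm{U}) \,\middle|\, \begin{bmatrix}\bm{X}&\bm{U}\\ \bm{U}^\top&\bm{Y}\end{bmatrix}\succeq 0\right\}=\mathrm{tr}\left(\left(\bm{Y}^{\frac{1}{2}}\bm{X}\bm{Y}^{\frac{1}{2}}\right)^{\frac{1}{2}}\right).
\end{align}
First observe that the singular values of $\bm{X}^{\frac{1}{2}}\bm{Y}^{\frac{1}{2}}$ are the square roots of the eigenvalues of $\bm{Y}^{\frac{1}{2}}\bm{X}\bm{Y}^{\frac{1}{2}}$. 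Hence the right-hand side equals the nuclear norm $\|\bm{X}^{\frac{1}{2}}\bm{Y}^{\frac{1}{2}}\|_\ast$, which is also $\|\bm{Y}^{\frac{1}{2}}\bm{X}^{\frac{1}{2}}\|_\ast$.

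\textbf{Parametrizing the feasible set.} First assume $\bm{X},\bm{Y}\in\mathbb{S}_{++}^n$. By a congruence with $\mathrm{blkdiag}(\bm{X}^{-\frac{1}{2}},\bm{Y}^{-\frac{1}{2}})$, the block matrix is positive semidefinite if and only if $\begin{bmatrix}\bm{I}_n&\bm{C}\\ \bm{C}^\top&\bm{I}_n\end{bmatrix}\succeq 0$, where $\bm{C}=\bm{X}^{-\frac{1}{2}}\bm{U}\bm{Y}^{-\frac{1}{2}}$. By a Schur complement, this holds if and only if $\bm{C}^\top\bm{C}\preceq\bm{I}_n$, that is, $\|\bm{C}\|_2\le 1$. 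The feasible set is therefore exactly $\{\bm{X}^{\frac{1}{2}}\bm{C}\bm{Y}^{\frac{1}{2}} \mid \|\bm{C}\|_2\le1\}$, and
\begin{align}
\mathrm{tr}(\bm{U})=\mathrm{tr}\left(\bm{C}\bm{Y}^{\frac{1}{2}}\bm{X}^{\frac{1}{2}}\right).
\end{align}

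\textbf{Upper bound and attainment.} Duality between the operator norm and the nuclear norm (von Neumann's trace inequality) gives
\begin{align}
\mathrm{tr}\left(\bm{C}\bm{Y}^{\frac{1}{2}}\bm{X}^{\frac{1}{2}}\right)\le\|\bm{C}\|_2\,\|\bm{Y}^{\frac{1}{2}}\bm{X}^{\frac{1}{2}}\|_\ast\le\|\bm{Y}^{\frac{1}{2}}\bm{X}^{\frac{1}{2}}\|_\ast .
\end{align}
For attainment, take the singular value decomposition $\bm{Y}^{\frac{1}{2}}\bm{X}^{\frac{1}{2}}=\bm{V}\bm{\Sigma}\bm{W}^\top$ and set $\bm{C}=\bm{W}\bm{V}^\top$, which is orthogonal and hence a contraction. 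Then $\mathrm{tr}(\bm{U})=\mathrm{tr}(\bm{\Sigma})$, so the bound is achieved and the minimum exists. Substituting the optimal value gives $\mathrm{tr}(\bm{X}+\bm{Y})-2\,\mathrm{tr}((\bm{Y}^{\frac{1}{2}}\bm{X}\bm{Y}^{\frac{1}{2}})^{\frac{1}{2}})=d_{\mathrm{BW}}^2(\bm{X},\bm{Y})$.

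\textbf{Main obstacle: singular $\bm{X}$ or $\bm{Y}$.} The step I expect to require the most care is extending the argument to $\bm{X},\bm{Y}\in\mathbb{S}_+^n$. For this I would use the general factorization of positive semidefinite block matrices: the block matrix is positive semidefinite if and only if $\bm{U}=\bm{X}^{\frac{1}{2}}\bm{C}\bm{Y}^{\frac{1}{2}}$ for some contraction $\bm{C}$. This can be proved with range inclusions and pseudoinverses, taking $\bm{C}=(\bm{X}^{\frac{1}{2}})^\dagger\bm{U}(\bm{Y}^{\frac{1}{2}})^\dagger$. With this factorization, the bound and the attainment step above go through verbatim.

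As an alternative route for the singular case, I would perturb to $\bm{X}+\epsilon\bm{I}_n$ and $\bm{Y}+\epsilon\bm{I}_n$ and let $\epsilon\to0$. This requires two facts: the optimal values converge, by continuity of the matrix square root; and the optimizers remain bounded, since $\|\bm{U}\|_2\le\|\bm{X}\|_2^{\frac{1}{2}}\|\bm{Y}\|_2^{\frac{1}{2}}$ on the feasible set. Together these give existence of a limiting minimizer and equality of the two sides.
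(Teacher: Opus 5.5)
Your argument is correct and complete. The paper does not prove this lemma itself; it imports it from Ch.~3 of the cited thesis. So there is no internal proof to compare step by step, and your proposal serves as a self-contained replacement for that citation.

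Your route is purely matrix-analytic. The factorization $\bm U=\bm X^{\frac12}\bm C\bm Y^{\frac12}$ with $\|\bm C\|_2\le 1$ comes from congruence and a Schur complement in the definite case, and from range inclusions and pseudoinverses in general. It turns the problem into maximizing a linear functional over the operator-norm unit ball. Operator/nuclear-norm duality then gives the value $\|\bm Y^{\frac12}\bm X^{\frac12}\|_\ast=\mathrm{tr}\bigl((\bm Y^{\frac12}\bm X\bm Y^{\frac12})^{\frac12}\bigr)$, together with the explicit optimizer $\bm U^\star=\bm X^{\frac12}\bm W\bm V^\top\bm Y^{\frac12}$.

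This gives you two things the paper relies on without comment:
\begin{itemize}
\item Attainment of the inner minimum. This is exactly the hypothesis of Lemma~\ref{lem:minmin} when the paper applies it in the proof of the SDP reformulation.
\item Validity on all of $\mathbb{S}_+^n$. This is the regime of the limit $\bm P_d\to\bm O_n$ in Section~\ref{sec:H2_control}, where the LMI forces $\bm U=\bm O_n$ and the objective reduces to $\mathrm{tr}(\bm P)$.
\end{itemize}

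What the optimal-transport viewpoint behind the citation adds instead is interpretation. The block matrix is the joint covariance of a coupling of $\mathcal N(\bm 0,\bm X)$ and $\mathcal N(\bm 0,\bm Y)$, and $\mathrm{tr}(\bm X+\bm Y-2\bm U)$ is the expected squared displacement under that coupling. This is the reading the paper uses to relate Gramian shaping to $H_2$ control.

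If you keep the perturbation argument as an alternative, add the one line it is missing for the upper bound. Any $\bm U$ feasible for $(\bm X,\bm Y)$ is also feasible for $(\bm X+\epsilon\bm I_n,\bm Y+\epsilon\bm I_n)$. Hence its trace is bounded by the perturbed optimal value, and therefore by the limit.
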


\begin{lem}[Ch.~11~\cite{RockafellarWets1998}]\label{lem:minmin}
Let $X$ and $Y$ be nonempty sets, and let $f:X\times Y\to\mathbb{R}$.
Assume that the minimum exists. Then
\begin{align}
\min_{(x,y)\in X\times Y} f(x,y)
=
\min_{x\in X}\left(\min_{y\in Y} f(x,y)\right).
\end{align}
\end{lem}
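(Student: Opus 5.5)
Let $m=\min_{(x,y)\in X\times Y} f(x,y)$, and let $(x^\ast,y^\ast)\in X\times Y$ be a point at which this minimum is attained; such a point exists by assumption. The plan is to show that the iterated problem $\min_{x\in X}\bigl(\min_{y\in Y} f(x,y)\bigr)$ is well posed at the relevant points and has the same value $m$. The only subtlety is that, for a general $x\in X$, the inner minimum need not be attained. I would therefore work with the inner value function $g(x)=\inf_{y\in Y} f(x,y)\in[-\infty,\infty)$, read the inner ``$\min$'' as this infimum, and check that both the outer and the relevant inner minima are actually attained.

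The first step is the lower bound. For every $x\in X$ and $y\in Y$ we have $f(x,y)\geq m$, because $m$ is the minimum over all of $X\times Y$. Taking the infimum over $y$ gives $g(x)\geq m$ for all $x\in X$; in particular $g$ never takes the value $-\infty$. The second step is attainment at $x^\ast$. For every $y\in Y$, $f(x^\ast,y)\geq m=f(x^\ast,y^\ast)$. Hence the inner problem at $x=x^\ast$ has a genuine minimum, attained at $y^\ast$, and $g(x^\ast)=m$. Combining the two steps, $g(x)\geq m=g(x^\ast)$ for all $x$, so the outer minimum exists, is attained at $x^\ast$, and equals $m$. This gives
\begin{align*}
\min_{x\in X}\Bigl(\min_{y\in Y} f(x,y)\Bigr)=g(x^\ast)=m=\min_{(x,y)\in X\times Y} f(x,y).
\end{align*}

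Since the lemma is used to identify optimal solutions, and not only optimal values, in the proof of the SDP reformulation, I would add the converse correspondence of minimizers. Suppose $\bar x$ minimizes $g$ and $\bar y$ attains the inner minimum at $\bar x$. Then $f(\bar x,\bar y)=g(\bar x)=m$, so $(\bar x,\bar y)$ is a joint minimizer. Conversely, the argument above shows that any joint minimizer $(x^\ast,y^\ast)$ is also an iterated minimizer.

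I expect no real obstacle; the argument is elementary. The one place that needs care is the second step. The well-definedness of the inner ``$\min$'' must be derived from the existence of the joint minimum, rather than assumed for every $x$, and the infimum formulation handles exactly this. In the application the roles are $x\leftrightarrow(\bm P,\bm E)$ and $y\leftrightarrow\bm U$. There the inner minimum is in fact always attained, by Lemma~\ref{lem:bures_min}, but the proof does not need this.
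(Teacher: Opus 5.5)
Your proof is correct, and it is a genuinely different route from the paper only in the sense that the paper gives no argument at all. It simply cites Rockafellar--Wets, where the underlying fact is the identity $\inf_{(x,y)}f(x,y)=\inf_x\inf_y f(x,y)$ for possibly extended-real-valued $f$; attainment is a separate matter there.

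Your direct argument supplies that elementary content. It also establishes two things the bare citation leaves implicit:
\begin{itemize}
\item the inner problem is genuinely attained at $x^\ast$, so the right-hand side makes sense where it matters;
\item joint and iterated minimizers correspond. This is what the paper actually uses when it concludes that the optimal solutions of \eqref{eq:bw_gramain_gain_problem_partial} and \eqref{eq:bw_gramain_gain_problem_all} coincide, since that step needs solutions and not just values.
\end{itemize}

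Your remark on the application needs one refinement. There the admissible $\bm U$ depends on $\bm P$ through the LMI, so the feasible set is not a Cartesian product $X\times Y$ as in the statement. Your argument still goes through verbatim if you replace $Y$ by the fiber $Y(x)=\{y:(x,y)\in C\}$ of a general feasible set $C$. Equivalently, set $f=+\infty$ off $C$, which is how Rockafellar--Wets handle constraints. That fiber version is the one the paper really needs.

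The hypothesis ``the minimum exists'' can also be read as existence of the iterated minimum rather than the joint one. In that case the chain $f(\bar x,\bar y)=g(\bar x)\le g(x)\le f(x,y)$ gives a joint minimizer directly, without presupposing $m$.
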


\end{document}